\def \Hsum#1{{{\underset{#1}{{\sum}^{\oplus}}}}}
  \newtheorem{theorem}{Theorem}[section] %
  \newtheorem{proposition}[theorem]{Proposition} %
  \newtheorem{lemma}[theorem]{Lemma} %
  \newtheorem{corollary}[theorem]{Corollary} %
  \newtheorem{fact}[theorem]{Fact}
\theoremstyle{definition} %
  \newtheorem{definition}[theorem]{Definition} %
  \newtheorem{example}[theorem]{Example} %
\theoremstyle{remark} %
  \newtheorem{remark}[theorem]{Remark} %
\begin{document}
\title{Admissible restrictions of irreducible representations
        of reductive Lie groups: 
symplectic geometry and discrete decomposability
\\
\normalsize{---
Dedicated to Bertram Kostant with admiration to his deep and vast perspectives
 and with sincere gratitude to his constant encouragement for many years}} %
\author{Toshiyuki KOBAYASHI
\\
Graduate School of Mathematical Sciences
 and Kavli IPMU (WPI)
\\
The University of Tokyo}

\date{} %

\maketitle %

{MSC 2010: Primary  22E46; 
          Secondary 
                    22E45, 
                    43A77,  
                    58F06
}
\vskip 0.5pc
{\bf{Keywords:}}\enspace
reductive group, 
 unitary representation, 
 symmetry breaking, 
 admissible restriction, 
 momentum map, 
 Harish-Chandra module, 
 convexity theorem
\begin{abstract}
Let $G$ be a real reductive Lie group,
 $L$ a compact subgroup, 
 and $\pi$ an irreducible admissible representation of $G$.
In this article we prove a necessary and sufficient condition 
 for the finiteness of the multiplicities of $L$-types
 occurring in $\pi$
 based on symplectic techniques.  
This leads us to a simple proof 
 of the criterion
 for discrete decomposability of
 the restriction of unitary representations
 with respect to noncompact subgroups (the author, Ann.~Math. 1998), 
 and also provides a proof of a reverse statement 
 which was announced in [Proc.~ICM 2002, Thm.~D].  
A number of examples are presented 
 in connection with Kostant's convexity theorem
 and also with non-Riemannian locally symmetric spaces.  
\end{abstract}

\newcommand{\gk}{$(\frak g, K)$}
\newcommand\domch{{\frak t_+^*}}
\newcommand\Hom{\operatorname{Hom}}
\newcommand\adm[1]{{${#1}$-admissible}}
\newcommand\Kasym[1]{\operatorname{AS}_K({#1})}
\newcommand\Ad{\operatorname{Ad}}
\newcommand{\SSK}[1]{\operatorname{Supp}_K({#1})} 
\newcommand\mSSK[1]{\operatorname{Supp}_K({#1})} 
\newcommand\posspan[1]{{\mathbb {#1}}_{\ge 0}\text{-{\rm{span}}}\,}
\newcommand\pr[2]{\operatorname{pr}_{{#1}\to{#2}}}
\newcommand\wtlat{\Lambda_+}
\newcommand\Image{{\operatorname{Image}}}
\newcommand \set[2]{\{{#1}:{#2}\}}
\newcommand\reg[1]{{\Bbb C[{#1}]}} 

\newcommand \itm[1]{\newline\indent{\rm{#1}}\enspace}
\newcommand\Ind{{\operatorname{Ind}}}

\section{Introduction and Statement of Main Results}
This article is a continuation of \cite{xkdecomp, xkdecoalg, xkdecoass},
 where we studied
 the restriction of an irreducible unitary representation $\pi$ of
 a real reductive Lie group $G$ with respect to a reductive subgroup $G'$.
There,
 we highlight branching laws 
{\bf{without continuous spectrum}}.  
As we mention in Section \ref{subsec:1.8} below, 
 a key to discrete decomposability is {\it{$K'$-admissibility}} of $\pi$
 (\cite[Thm.~1.2]{xkdecomp}), 
 that is, 
\begin{equation}
\label{eqn:mfinite}
   \dim_{\mathbb{C}} \Hom_{K'}(\tau, \pi|_{K'}) < \infty
    \quad
    \text{for any}
    \quad
    \tau \in \widehat{K'}, 
\end{equation}
where $K'$ is a maximal compact subgroup of $G'$.

In this article we prove a necessary and sufficient condition
 for the $K'$-admissibility 
 of irreducible \gk-modules $X$ with $K' \subset K$.

\subsection{Two closed cones $\operatorname{A S}_K(X)$ and $C_K(K')$}
\label{subsec:3}
In order to state our main results,
 let us fix some notation.
 
Let $G$ be a connected linear reductive Lie group, 
 $K$ a maximal compact subgroup of $G$,
 and $T$ a maximal torus of $K$.
Their Lie algebras will be denoted 
 by the lowercase German letters.  
Fix a positive system $\Delta^+(\frak k_{\mathbb{C}}, \frak t_{\mathbb{C}})$,
  and we write $\frak t_+^*$ $(\subset \sqrt{-1} {\mathfrak{t}}^{\ast})$
 for the dominant Weyl chamber.
The set of dominant weights 
 which lift to the torus $T$ is denoted by $\Lambda_+$.  
It is a submonoid of ${\mathfrak{t}}_+^{\ast}$
 (that is, 
 it contains 0 and is invariant under addition).  
The Cartan--Weyl highest weight theory
 for the group version
 establishes a bijection between $\widehat{K}$ with $\wtlat$.
We shall denote by $V_\mu$ the irreducible representation of $K$
 with highest weight $\mu \in \wtlat$.

For a subset $S$ in a Euclidean space $E$,
 the limit cone $S\infty$ is the set of $E$
  consisting of all elements of the form $\lim_{j\to \infty} \varepsilon_j \mu_j$
  for some sequence  $(\mu_j, \varepsilon_j) \in S \times \Bbb R_+$
 with $\lim_{j\to\infty} \varepsilon_j=0$
 (\cite[Def.~2.4.2]{xkkk}).
The {\it{asymptotic $K$-support}} $\Kasym{X}$ of a $K$-module $X$ is defined 
 to be the limit cone
 of the $K$-support of $X$
 (Kashiwara--Vergne \cite{xkvktype}):
\begin{alignat}{2}
  \SSK{X} &:=\set{\mu \in \wtlat}{\Hom_K(V_\mu, X) \neq \{0\}}\,\,
  &&\subset \Lambda_+,
\label{eqn:SuppK}
\\
  \Kasym{X} &:= \SSK{X} \infty &&\subset{\mathfrak{t}}_+^{\ast}.
\label{eqn:ASK}
\end{alignat}

Let $K'$ be a closed subgroup of $K$, 
 and set $(\frak k')^\perp:=\{\lambda \in {\mathfrak{k}}^{\ast}:\lambda|_{\mathfrak{k}'}\equiv 0\}$.
We regard ${\mathfrak{t}}^{\ast}$ as a subspace 
 of ${\mathfrak{k}}^{\ast}$
 via a $K$-invariant inner product on ${\mathfrak{k}}$, 
 and define a closed cone in $\sqrt{-1}{\mathfrak{t}}^{\ast}$
 by 
\begin{equation}
\label{eqn:1.5.1}
   C_K(K') := \frak t_+^* \cap \sqrt{-1} \Ad^*(K)(\frak k')^\perp.
\end{equation}

These two closed cones $\operatorname{A S}_K(X)$ 
 and $C_K(K')$ are a finite union 
 of convex polyhedral cones
 (Propositions \ref{prop:2.5} and \ref{prop:2.2}, respectively).

\subsection{Criterion for finite multiplicities}
\label{subsec:1.6}
Here is our main theorem:
\begin{theorem}
\label{thm:A}
Let $X$ be a \gk-module of finite length, 
 and $K'$ a closed subgroup of $K$.  
Then the following two conditions are equivalent:
\itm{(i)} $X$ is \adm{K'};
\itm{(ii)}
 $\Kasym{X} \cap C_K(K') = \{0\}$.
\end{theorem}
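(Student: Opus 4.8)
The plan is to evaluate the $K'$-multiplicities of $X$ by first decomposing $X$ into $K$-types and then restricting each $K$-type to $K'$, and to control this second step through the momentum-map (convexity) description of the cone $C_K(K')$.

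First I would reinterpret $C_K(K')$ geometrically. Since $(\mathfrak k')^{\perp}$ is exactly the kernel of the restriction map $\operatorname{pr}\colon \mathfrak k^{\ast}\to(\mathfrak k')^{\ast}$, a dominant direction $\lambda\in\mathfrak t_+^{\ast}$ belongs to $C_K(K')$ if and only if the coadjoint orbit $\Ad^{\ast}(K)\lambda$ meets $\ker\operatorname{pr}$, i.e. if and only if $0\in\operatorname{pr}(\Ad^{\ast}(K)\lambda)$. By the convexity theorem the intersection of the $K'$-invariant set $\operatorname{pr}(\Ad^{\ast}(K)\lambda)$ with the $K'$-chamber is a convex polytope, and by the Kashiwara--Vergne asymptotic theory (\cite{xkvktype}) this polytope governs the leading part of the $K'$-support of $V_\mu|_{K'}$: a fixed $\tau\in\widehat{K'}$, of highest weight $\nu$, occurs in $V_\mu|_{K'}$ only if $\nu$ lies within a bounded distance of $\operatorname{pr}(\Ad^{\ast}(K)\mu)$, while conversely the points of this polytope are asymptotically realized by highest weights of $K'$-types that actually occur. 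The polyhedrality of both $\Kasym{X}$ and $C_K(K')$ (Propositions \ref{prop:2.5} and \ref{prop:2.2}) keeps these cones closed and makes the separation condition (ii) robust.

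Next I would record the bookkeeping identity. As $X$ has finite length, every $K$-type occurs finitely often, and
\[
\dim_{\mathbb C}\Hom_{K'}(\tau, X|_{K'})=\sum_{\mu\in\SSK{X}}\dim\Hom_K(V_\mu,X)\cdot[V_\mu|_{K'}:\tau],
\]
so, since each summand over $\SSK{X}$ has a nonzero first factor, the left side is infinite precisely when infinitely many $\mu\in\SSK{X}$ satisfy $[V_\mu|_{K'}:\tau]\neq0$. This reduces both implications to counting such $\mu$. For (ii)$\Rightarrow$(i) I argue by contradiction: if some $\tau$ occurs infinitely, choose $\mu_j\in\SSK{X}$ with $[V_{\mu_j}|_{K'}:\tau]\neq0$ and $|\mu_j|\to\infty$, and pass to a subsequence with $\mu_j/|\mu_j|\to\lambda$ on the unit sphere, so that $\lambda\in\Kasym{X}\setminus\{0\}$ by the definition of the limit cone. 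Because $\nu$ is fixed, dividing by $|\mu_j|$ in the containment of the previous paragraph and using the homogeneity and closedness of $\operatorname{pr}(\Ad^{\ast}(K)\,\cdot\,)$ yields $0\in\operatorname{pr}(\Ad^{\ast}(K)\lambda)$, that is $\lambda\in C_K(K')$, contradicting (ii).

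For the converse I prove the contrapositive of (i)$\Rightarrow$(ii). Assume $\lambda\in\Kasym{X}\cap C_K(K')$ with $\lambda\neq0$; by definition of the limit cone there is $\mu_j\in\SSK{X}$ with $|\mu_j|\to\infty$ and $\mu_j/|\mu_j|\to\lambda/|\lambda|$. Since $\lambda\in C_K(K')$ gives $0\in\operatorname{pr}(\Ad^{\ast}(K)\lambda)$, for large $j$ the momentum polytope of $\Ad^{\ast}(K)\mu_j$ contains points within bounded distance of the origin; the lower-bound half of the asymptotic multiplicity theory then forces $V_{\mu_j}|_{K'}$ to contain a $K'$-type $\tau_j$ whose highest weight stays in a fixed bounded set. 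By pigeonhole a single $\tau$ recurs for infinitely many $j$, whence $\dim_{\mathbb C}\Hom_{K'}(\tau, X|_{K'})=\infty$ and $X$ is not \adm{K'}. I expect the crux, and the main obstacle, to be exactly this last step: upgrading the purely set-theoretic fact ``$0$ lies in the momentum image'' to the genuine occurrence of bounded $K'$-types whose multiplicities actually blow up. This is precisely where the symplectic input (the convexity theorem together with Kashiwara--Vergne/Guillemin--Sternberg asymptotics, rather than a mere support containment) is indispensable.
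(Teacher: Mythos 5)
Your treatment of the direction (ii) $\Rightarrow$ (i) is essentially viable and takes a genuinely different route from the paper: you control the $K'$-content of each individual $K$-type $V_{\mu}$ by the momentum polytope of the coadjoint orbit through $\mu$ (the Guillemin--Sternberg/Heckman upper bound for branching), then rescale by $|\mu_j|^{-1}$ and pass to the limit cone. This avoids associated varieties entirely for that implication, at the price of importing the branching upper bound as a black box; the paper instead sandwiches $X$ by $\reg{\overline{\mathcal O_j}}\otimes F_j$ via the associated variety (Lemma \ref{lem:2.3}) and identifies $C_K(K')$ with $\posspan{R}T=\Kasym{\reg{K_{\mathbb C}/K'_{\mathbb C}}}$ by Sjamaar's theorem (Proposition \ref{prop:2.2}).

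The direction (i) $\Rightarrow$ (ii), however, has a genuine gap at the step you yourself flagged. From $\mu_j/|\mu_j|\to\lambda/|\lambda|$ and $0\in\operatorname{pr}(\Ad^*(K)\lambda)$ you infer that the momentum polytope of $\Ad^*(K)\mu_j$ comes within bounded distance of the origin. But $\operatorname{dist}(\Ad^*(K)\mu_j,(\frak k')^\perp)=|\mu_j|\cdot\operatorname{dist}(\Ad^*(K)(\mu_j/|\mu_j|),(\frak k')^\perp)$ is the product of a divergent factor with one tending to $0$, and need not stay bounded. In fact the implication fails for general admissible $K$-modules: take $K=T^2$, $K'=T\times\{1\}$, so that $C_K(K')$ is the second coordinate axis, and let $X$ have $K$-support $\{(\lfloor\sqrt n\rfloor,n):n\ge 1\}$ with multiplicity one. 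Each $K'$-character $m$ occurs only $2m+1$ times, so $X$ is \adm{K'}, yet $\Kasym{X}$ contains the ray through $(0,1)$, so (ii) fails; your argument uses nothing about $X$ beyond admissibility of its $K$-support, so it cannot be repaired without new input. The missing ingredient is precisely where the finite-length hypothesis enters the paper's proof: Lemma \ref{lem:2.3} gives $X\otimes F_j^*\ge_K\reg{\overline{\mathcal O_j}}$, so $\SSK{X\otimes F_j^*}$ contains the entire finitely generated monoid $\posspan{Z}S_j$ whose real span is the corresponding component of $\Kasym X$ --- ruling out the ``drifting away from the cone'' phenomenon above --- and then the rationality statement of Lemma \ref{lem:2.8} shows that two rational polyhedral cones meeting outside the origin share infinitely many lattice points, which, combined with $\SSK{\reg{K_{\mathbb C}/K'_{\mathbb C}}}=\{\mu:\Hom_{K'}({\bf 1},V_\mu)\ne\{0\}\}$, produces one fixed $K'$-type with infinite multiplicity. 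Note that this rationality step is also needed to make your final pigeonhole work even along an exact ray: asymptotic realizability of points of the momentum polytope does not by itself yield infinitely many actual occurrences of a single $\tau$.
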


Some remarks are in order.
\newline
(1) The main result of \cite{xkdecoalg} was a discovery
 of the criterion (ii) in Theorem \ref{thm:A}, 
 and the implication (ii) $\Rightarrow$ (i) was proved 
 in \cite[Thm.~2.8]{xkdecoalg}
 based on micro-local study:
 the asymptotic $K$-support $\operatorname{A S}_K(X)$
 played a role in an estimate of the singularity spectrum
 of the hyperfunction character of $X|_K$.
In this article 
 we give a new and simple proof for the implication (ii) $\Rightarrow$ (i)
 based on symplectic geometry:
 the cone $C_K(K')$ is interpreted as the momentum set
 for the natural Hamiltonian action
 on the cotangent bundle $T^{\ast}(K/K')$, 
 see Section \ref{subsec:2.2}.  
\newline
(2) In this article, 
 we also give a proof
 of the reverse implication (i) $\Rightarrow$ (ii).  
This statement was announced 
 in the proceeding of ICM 2002 \cite[Thm.~D]{xkICM2002}, 
 and a sketch of the proof was given
 in the lecture notes \cite[Chap.~6]{deco-euro}, 
 however, 
 the full proof has not been published until this article.
\newline
(3) \enspace
Theorem~\ref{thm:A} still holds for disconnected groups,
 namely, we may allow $K$
  to have finitely many connected components.  
In this case, 
 the same proof works by using the asymptotic $K_0$-support 
 of $X$ regarded as a $K_0$-module, 
 where $K_0$ is the identity component of $K$.

\subsection{Admissible restriction to noncompact subgroups}
\label{subsec:1.8}
Let $\pi$ be a unitary representation of $G$, 
 and $G'$ a subgroup.  
By the general theory
 of unitary representations of locally compact groups \cite{xmautner}, 
 the restriction $\pi|_{G'}$ is decomposed
 into the direct integral 
 of irreducible unitary representations of $G'$, 
 uniquely up to isomorphisms
 when $G'$ is reductive 
 \cite{xHCadm}, 
 as follows:
\begin{equation}
\label{eqn:branch}
   \pi|_{G'}
   \simeq
  \int_{\widehat{G'}}^{\oplus}
  m_{\pi}(\tau) d \mu(\tau)
  \quad
  \text{(direct integral)}, 
\end{equation}
where $\widehat{G'}$ denotes the unitary dual of $G'$, 
 that is, 
 the set of equivalence classes
 of irreducible unitary representations of $G'$, 
 $d \mu$ is a Borel measure of $\widehat{G'}$, 
 and $m_{\pi} \colon \widehat{G'} \to {\mathbb{N}} \cup \{\infty\}$
 is a measurable function.  
The irreducible decomposition \eqref{eqn:branch}
 is called the {\it{branching law}}
 of the restriction $\pi|_{G'}$, 
 and $m_{\pi}$ is the {\it{multiplicity}}.  
In general the branching law may involve continuous spectrum, 
 and the multiplicity $m_{\pi}$ may take infinite values.  
The following definition singles out
 a framework 
 in which we could expect a simple
 and detailed algebraic study
 of the restriction $\pi|_{G'}$
 ({\it{symmetry breaking}}, 
 {\it{cf.}} \cite{vogan60}).  

\begin{definition}
[{\cite[Sect.~1]{xkdecomp}}]
\label{def:admrest}
We say a unitary representation $\pi$ of $G$ is $G'$-{\it{admissible}}
 if $\pi$ splits into a direct sum 
 of irreducible unitary representations
 of $G'$
\[
   \pi|_{G'}
   \simeq
 \Hsum{\tau \in \widehat {G'}} m(\tau) \tau
\quad
\text{\rm{(Hilbert direct sum)}}
\]
 with multiplicity $m(\tau) < \infty$
 for all $\tau \in \widehat{G'}$.  
\end{definition}

If $G'$ itself is compact,
 then the decomposition \eqref{eqn:branch} is automatically discrete, 
 and thus,
 $G'$-admissibility is nothing but the finiteness
 of the multiplicity
 $m_{\pi}(\tau)$ for all $\tau$.  
In the general case where $G'$ is noncompact,
 we take a maximal compact subgroup $K'$ of $G'$.  
Then $K'$-admissibility implies 
 $G'$-admissibility 
 (\cite[Thm.~1.2]{xkdecomp}).  
Therefore, 
 as an immediate corollary of Theorem \ref{thm:A}, 
 we recover:
\begin{corollary}
[{\cite[Thm.~2.9]{xkdecoalg}}]
\label{cor:C}
Let $\pi  \in \widehat{G}$, and $G'$ a reductive subgroup of $G$.
If $\Kasym{\pi} \cap \sqrt{-1} \Ad^*(K)(\frak k')^\perp = \{0\}$,
 then the restriction $\pi|_{G'}$ splits into a {\bf discrete}
  sum of irreducible unitary representations of $G'$
 with {\bf{finite multiplicities}}.  
\end{corollary}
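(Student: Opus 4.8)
The plan is to derive Corollary~\ref{cor:C} directly from Theorem~\ref{thm:A} together with the reduction-to-$K'$ result $(\cite[\text{Thm.}~1.2]{xkdecomp})$ quoted in Section~\ref{subsec:1.8}. First I would observe that $\pi \in \widehat G$ is in particular an irreducible admissible unitary representation, so its underlying Harish-Chandra module $X := \pi_K$ is an irreducible \gk-module, hence of finite length; this is exactly the class of modules to which Theorem~\ref{thm:A} applies. Since $K'$ is a maximal compact subgroup of the reductive subgroup $G'$ and $K'$ is compact, one may conjugate so that $K' \subset K$, placing us in the setting $K' \subset K$ required by the theorem. Thus I have arranged all the hypotheses of Theorem~\ref{thm:A} for the pair $(X, K')$.

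Next I would unwind the hypothesis of the corollary and match it against condition (ii) of Theorem~\ref{thm:A}. The corollary assumes
\[
   \Kasym{\pi} \cap \sqrt{-1}\,\Ad^*(K)(\frak k')^\perp = \{0\}.
\]
By definition \eqref{eqn:1.5.1} the cone $C_K(K')$ is $\frak t_+^* \cap \sqrt{-1}\,\Ad^*(K)(\frak k')^\perp$, and by \eqref{eqn:ASK} the asymptotic support $\Kasym{\pi} = \Kasym{X}$ already lies in $\frak t_+^*$. Hence intersecting the displayed hypothesis with $\frak t_+^*$ costs nothing on the left factor, and one gets $\Kasym{X} \cap C_K(K') = \{0\}$, which is precisely condition (ii). Here I should remark that $\Kasym{\pi}$ is computed from the $K$-module structure of $\pi$, which agrees with that of its Harish-Chandra module $X$, so the two asymptotic supports coincide.

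Applying Theorem~\ref{thm:A} (the implication (ii)$\Rightarrow$(i)), I conclude that $X$ is \adm{K'}, i.e. the restriction of $\pi$ to $K'$ has finite multiplicities in the sense of \eqref{eqn:mfinite}. The final step invokes the quoted theorem $\cite[\text{Thm.}~1.2]{xkdecomp}$, by which $K'$-admissibility of $\pi$ implies $G'$-admissibility: the restriction $\pi|_{G'}$ then decomposes discretely as a Hilbert direct sum of irreducible unitary representations of $G'$ with finite multiplicities, in the sense of Definition~\ref{def:admrest}. This yields exactly the assertion of the corollary.

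I do not expect a genuine obstacle, since the corollary is designed to be an immediate consequence of Theorem~\ref{thm:A}; the only points that require care are bookkeeping ones, namely verifying that $\Kasym{\pi}$ for the unitary representation coincides with $\Kasym{X}$ for its Harish-Chandra module, and that dropping the chamber constraint $\frak t_+^*$ in the corollary's hypothesis is harmless because $\Kasym{X}$ is automatically contained in $\frak t_+^*$. The substantive work has all been front-loaded into Theorem~\ref{thm:A} and into the passage from $K'$-admissibility to $G'$-admissibility, both of which I am entitled to assume.
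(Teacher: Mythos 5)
Your proposal is correct and follows exactly the route the paper takes: the hypothesis is condition (ii) of Theorem~\ref{thm:A} (since $\Kasym{\pi}\subset\frak t_+^*$, the chamber constraint in $C_K(K')$ is harmless), Theorem~\ref{thm:A} then gives $K'$-admissibility, and the quoted result \cite[Thm.~1.2]{xkdecomp} upgrades this to $G'$-admissibility. The paper presents the corollary as an immediate consequence in precisely this way, so nothing further is needed.
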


\subsection{Restriction of discrete series representations}
\label{subsec:DiscG}

It is plausible, 
 see \cite[Conj.~D]{aspm}, 
 that the converse of \cite[Thm.~1.2]{xkdecomp} also holds, 
 namely, 
 $G'$-admissibility is equivalent
 to $K'$-admissibility
 if the representation arises as the restriction of an irreducible unitary representation
 of a real reductive linear Lie group $G$
 to its reductive subgroup $G'$ 
 with maximal compact subgroup $K'$.  
If this conjecture is affirmative, 
 then the criterion in Theorem \ref{thm:A} will give 
 a necessary and sufficient condition
 for the restriction $\pi|_{G'}$ 
 to be $G'$-admissible.  
In this section
 we discuss such an example.

An irreducible unitary representation $\pi$ of $G$
 is called a {\it{square-integrable representation}}
 if it is realized in a closed invariant subspace
 of the regular representation on the Hilbert space $L^2(G)$.  
The isomorphism classes of all such irreducible, 
 square integrable representations
 constitute a subset
 $\operatorname{Disc}(G) \subset \widehat {G}$, 
 the {\it{discrete series}} of $G$.  
In this case, 
 the conjecture is true, 
 see \cite{xdgv, hcr-rest, xzhuliang}.  
By Theorem \ref{thm:A}, 
 we can detect whether $\pi$ is $G'$-admissible
 or not when restricted to a reductive subgroup $G'$:
\begin{corollary}
\label{cor:HCdeco}
Let $\pi$ be a square-integrable representation of $G$, 
 and $G'$ a closed reductive subgroup of $G$.  
Then the following four conditions on the triple $(G, G', \pi)$
 are equivalent:
\itm{(i)}
The restriction $\pi|_{G'}$ is $G'$-admissible.  
\itm{(i)$'$}
There is a map $m \colon \operatorname{Disc}(G') \to {\mathbb{N}}$
 such that
\[
  \pi|_{G'} \simeq \Hsum{\tau \in \operatorname{Disc}(G')} m(\tau) \tau
\qquad
\text{\rm{(Hilbert direct sum)}}.  
\]
\itm{(ii)}
The restriction $\pi|_{K'}$ is $K'$-admissible.  
\itm{(iii)}
$\Kasym{\pi} \cap \sqrt{-1} \Ad^{\ast}(K)({\mathfrak{k}}')^{\perp}
=\{0\}.  
$
\end{corollary}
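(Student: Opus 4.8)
The plan is to obtain the four-fold equivalence by feeding the special structure of square-integrable representations into Theorem~\ref{thm:A}, after first disposing of the formal implications. The equivalence (ii) $\Leftrightarrow$ (iii) is immediate: applying Theorem~\ref{thm:A} to the underlying \gk-module $X$ of $\pi$, which is irreducible and hence of finite length, condition (ii) says exactly that $X$ is \adm{K'}, while $\Kasym{\pi}=\Kasym{X}\subset\mathfrak{t}_+^*$ gives $\Kasym{\pi}\cap\sqrt{-1}\Ad^*(K)(\mathfrak{k}')^\perp=\Kasym{X}\cap C_K(K')$, so that (iii) is precisely the criterion $\Kasym{X}\cap C_K(K')=\{0\}$ of Theorem~\ref{thm:A}. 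Moreover (i)$'$ $\Rightarrow$ (i) is trivial, since a Hilbert direct sum over $\operatorname{Disc}(G')\subset\widehat{G'}$ with finite multiplicities is by definition $G'$-admissible, and (iii) $\Rightarrow$ (i) is Corollary~\ref{cor:C}. It therefore remains only to prove (iii) $\Rightarrow$ (i)$'$ and (i) $\Rightarrow$ (iii), which are the two points where square-integrability of $\pi$ must genuinely enter.

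For (iii) $\Rightarrow$ (i)$'$ I would first use Corollary~\ref{cor:C} to produce a discrete decomposition $\pi|_{G'}\simeq\Hsum{\tau\in\widehat{G'}}m(\tau)\tau$ with all $m(\tau)<\infty$, and then identify every occurring $\tau$ as a member of $\operatorname{Disc}(G')$. The key input is that square-integrability is inherited under a discretely decomposable, $K'$-admissible restriction: each constituent is a closed $G'$-subrepresentation whose underlying $(\mathfrak{g}',K')$-module is again a discrete series module, as established for discrete series in \cite{xdgv, hcr-rest, xzhuliang}. Granting this, the index set of the decomposition lies in $\operatorname{Disc}(G')$ and (i)$'$ follows at once.

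The main obstacle is the reverse implication (i) $\Rightarrow$ (iii), which is the converse of \cite[Thm.~1.2]{xkdecomp} and is false for general unitary $\pi$; here square-integrability is indispensable. I would argue by contraposition, using the symplectic picture emphasized in this paper: attach to $\pi$ its elliptic coadjoint orbit $\mathcal{O}\subset\sqrt{-1}\mathfrak{g}^*$, so that by \cite{xdgv, hcr-rest, xzhuliang} the restriction $\pi|_{G'}$ is $G'$-admissible exactly when the $G'$-moment map $\mathcal{O}\to\sqrt{-1}(\mathfrak{g}')^*$ is proper. If (iii) fails then, by (ii) $\Leftrightarrow$ (iii), $\pi$ is not \adm{K'}, and one obtains a nonzero ray in $\Kasym{\pi}\cap\sqrt{-1}\Ad^*(K)(\mathfrak{k}')^\perp$ along which this moment map is not proper, forcing continuous spectrum or infinite multiplicity in $\pi|_{G'}$ and contradicting (i). The delicate step, and the one I expect to be hardest, is to translate a nonzero element of the intersection in (iii) into a genuine failure of properness of the moment map at infinity, i.e.\ to match the asymptotic $K$-support $\Kasym{\pi}$ with the asymptotic directions of the moment image.
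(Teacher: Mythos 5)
Your reductions are fine up to the last step: (ii) $\Leftrightarrow$ (iii) via Theorem~\ref{thm:A}, (i)$'$ $\Rightarrow$ (i) trivially, and (iii) $\Rightarrow$ (i), (i)$'$ via Corollary~\ref{cor:C} together with the fact that discrete summands of $\pi|_{G'}$ for $\pi\in\operatorname{Disc}(G)$ lie in $\operatorname{Disc}(G')$. But the implication you correctly isolate as the crux, (i) $\Rightarrow$ (iii), is not actually proved: you propose to argue by contraposition through properness of the moment map on the elliptic coadjoint orbit attached to $\pi$, and you yourself flag the decisive step --- converting a nonzero ray in $\Kasym{\pi}\cap\sqrt{-1}\Ad^*(K)({\mathfrak k}')^{\perp}$ into a failure of properness forcing infinite multiplicity or continuous spectrum --- as an open ``delicate step.'' That step is the entire content of the hard direction, and the references you lean on (\cite{xdgv, hcr-rest, xzhuliang}) treat special situations and are not quoted in the paper as supplying a general properness criterion; so as written the proof is incomplete.

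The paper closes the loop without any symplectic argument, by a short cycle $(\mathrm{i})\Rightarrow(\mathrm{i})'\Rightarrow(\mathrm{ii})\Rightarrow(\mathrm{i})$ together with $(\mathrm{ii})\Leftrightarrow(\mathrm{iii})$. The two inputs are: first, any discrete summand of $\pi|_{G'}$ for $\pi\in\operatorname{Disc}(G)$ belongs to $\operatorname{Disc}(G')$ (\cite[Cor.~8.7]{xkdisc}), giving (i) $\Rightarrow$ (i)$'$; second --- and this is the step that replaces your moment-map analysis --- for each $\mu\in\widehat{K'}$ there are only finitely many elements of $\operatorname{Disc}(G')$ containing $\mu$ as a $K'$-type. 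Combined with the finiteness of each $m(\tau)$ and the $K'$-admissibility of each $\tau\in\operatorname{Disc}(G')$, this yields
\[
\dim_{\mathbb C}\Hom_{K'}(\mu,\pi|_{K'})=\sum_{\tau}m(\tau)\,\dim_{\mathbb C}\Hom_{K'}(\mu,\tau|_{K'})<\infty,
\]
i.e.\ (i)$'$ $\Rightarrow$ (ii), and then Theorem~\ref{thm:A} gives (iii). If you want to salvage your outline, replace the contrapositive moment-map argument for (i) $\Rightarrow$ (iii) by this purely representation-theoretic chain; otherwise you must actually prove the asymptotic properness statement, which is not in the paper and is substantially harder than the corollary itself.
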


\begin{remark}
{\rm{
In the case where $(G,G')$ is an irreducible symmetric pair,
 the triple $(G,G',\pi)$
 satisfying the criterion (iii) was classified
 in Kobayashi--Oshima \cite{decoAq}.  
We refer to \cite{xbrionc, Ko93, xkdecomp, xkhowe70, xvargas16}
 for some explicit formulas of discrete branching laws.  
On the other hand, 
 Duflo--Galina--Vargas \cite{xdgv} studied in detail
 the case where the subgroup $G'$ is isomorphic to $S L(2,{\mathbb{R}})$ 
 or $P S L (2,{\mathbb{R}})$.  
}}
\end{remark}

The proof of Theorem \ref{thm:A} and Corollary \ref{cor:HCdeco}
 is given in Section \ref{sec:2}.  
Applications of Theorem \ref{thm:A} are given 
 in connection with Kostant's convexity theorem
 for momentum maps 
 and with the boundaries of semisimple symmetric spaces
 in Sections \ref{sec:convex} and \ref{sec:4}, 
 respectively.  

\vskip 1pc
\par\noindent
{\bf{Notation:}}\enspace
${\mathbb{R}}_{\ge 0} :=\{x \in {\mathbb{R}}: x \ge 0\}$, 
${\mathbb{Q}}_{\ge 0} := {\mathbb{Q}} \cap {\mathbb{R}}_{\ge 0}$ 
 and ${\mathbb{N}}_{\ge 0}:= {\mathbb{N}} \cap {\mathbb{R}}_{\ge 0}$. 
 
\section{Proof of Main Results}
\label{sec:2}
In this section,
 we give an interpretation of the two invariants
 $\operatorname{A S}_{K}(\pi)$ and $C_K(K')$ from a viewpoint
 of symplectic geometry,
 and prove Theorem \ref{thm:A}. 

\subsection{Rational convex polyhedral cones}
\label{subsec:2.8}
Let $E$ be a finite-dimensional vector space over ${\mathbb{Q}}$, 
 and $S$ a finite subset of $E$.  
The {\it{convex polyhedral cone}} spanned by $S$ is the smallest convex cone
 in $E$, 
 that is, 
\[
  {\mathbb{Q}}_{\ge 0}\operatorname{-span} S
  =
  \{
   \sum_{j=1}^k a_j s_j
   :
   a_1, \cdots, a_k \in {\mathbb{Q}}_{\ge 0}, 
   s_1, \cdots, s_k \in S
\}.  
\]
Similarly,
 we can define ${\mathbb{Z}}_{\ge 0}\operatorname{-span} S$
 $(\subset E)$
 and ${\mathbb{R}}_{\ge 0}\operatorname{-span} S$
 $(\subset E \otimes_{\mathbb{Q}} {\mathbb{R}})$.

Here is an elementary observation of the intersections
 of two such polyhedral cones.  

\begin{lemma}
\label{lem:2.8}
Let $S, T$ be finite subsets of ${\mathbb Q}^n$.
Then the following four conditions on $S$ and $T$ are equivalent:
\itm{(i)} $\posspan{Z} S \cap \posspan{Z} T \neq \{0\}$;
\itm{(ii)} $\posspan{Q} S \cap \posspan{Q} T \neq \{0\}$;
\itm{(iii)} $\posspan{R} S \cap \posspan{R} T \neq \{0\}$;
\itm{(iv)}$(\delta\text{\rm{-neighbourhood of }}\posspan{R} S) \cap \posspan{R} T$
 is unbounded for some $\delta >0$.  
\end{lemma}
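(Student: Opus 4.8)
The plan is to establish the cycle (i) $\Rightarrow$ (ii) $\Rightarrow$ (iii) $\Rightarrow$ (i) and then to treat the equivalence (iii) $\Leftrightarrow$ (iv) as a separate, purely real-geometric matter. The first two implications are immediate from the inclusions $\mathbb{Z}_{\ge 0} \subset \mathbb{Q}_{\ge 0} \subset \mathbb{R}_{\ge 0}$: the three cones attached to $S$ are nested, as are those attached to $T$, so a nonzero point common to the two $\posspan{Z}$-cones is a fortiori common to the two $\posspan{Q}$-cones, and likewise $\posspan{Q}$ into $\posspan{R}$. The content of the lemma therefore lies entirely in the two ``descent'' implications (iii) $\Rightarrow$ (i) and (iv) $\Rightarrow$ (iii).

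For (iii) $\Rightarrow$ (i) I would argue arithmetically. The cones $\posspan{R} S$ and $\posspan{R} T$ are finitely generated over $\mathbb{R}$ by rational vectors, hence by the Minkowski--Weyl theorem each is closed and is cut out by finitely many rational half-spaces; consequently their intersection $\posspan{R} S \cap \posspan{R} T$ is again a rational polyhedral cone, and by Minkowski--Weyl in the reverse direction it is finitely generated by rational rays. If this intersection is nonzero, one of those generators $v$ is a nonzero rational vector lying in $\posspan{R} S$. Since the fibre $\{a \in \mathbb{R}_{\ge 0}^{|S|} : \sum_i a_i s_i = v\}$ is a nonempty rational polyhedron, it contains a rational point, so $v \in \posspan{Q} S$; symmetrically $v \in \posspan{Q} T$. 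Clearing a common denominator $N$ of all the coefficients produces $Nv \in (\posspan{Z} S) \cap (\posspan{Z} T)$ with $Nv \neq 0$, which is (i). (Equivalently, one may push forward the rational generators of the coupling cone $\{(a,b)\ge 0 : \sum_i a_i s_i = \sum_j b_j t_j\}$ under the rational linear map $(a,b)\mapsto \sum_i a_i s_i$.)

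For the remaining equivalence, (iii) $\Rightarrow$ (iv) is trivial: a nonzero $v \in \posspan{R} S \cap \posspan{R} T$ spans a ray $\mathbb{R}_{\ge 0}v$ that is contained in $\posspan{R} T$ and lies inside $\posspan{R} S$ itself, hence inside its $\delta$-neighbourhood for every $\delta>0$, and this ray is unbounded. For (iv) $\Rightarrow$ (iii) I would run a normalization argument: choose $x_j \in \posspan{R} T$ with $t_j := \|x_j\| \to \infty$ and $\mathrm{dist}(x_j, \posspan{R} S) \le \delta$. Because $\posspan{R} T$ is a cone, the unit vectors $u_j := t_j^{-1}x_j$ again lie in $\posspan{R} T$, and after passing to a subsequence $u_j \to u$ with $\|u\|=1$; closedness of $\posspan{R} T$ gives $u \in \posspan{R} T$. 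Picking $y_j \in \posspan{R} S$ with $\|x_j - y_j\|\le \delta$, the scaled points $t_j^{-1}y_j$ lie in $\posspan{R} S$ and satisfy $\|u_j - t_j^{-1}y_j\| \le \delta/t_j \to 0$, so $t_j^{-1}y_j \to u$ and closedness of $\posspan{R} S$ gives $u \in \posspan{R} S$. Thus $u$ is a nonzero common point, which is (iii).

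I expect the main obstacle to be the arithmetic descent (iii) $\Rightarrow$ (i): this is the one step where the hypothesis $S,T \subset \mathbb{Q}^n$ is genuinely used, since two arbitrary \emph{real} polyhedral cones may well meet nontrivially without sharing any rational ray. The argument rests on two standard but essential facts, namely the Minkowski--Weyl structure theorem (finitely generated rational cones are exactly the rational polyhedral cones, and rationality is preserved under intersection) and the fact that a nonempty rational polyhedron always contains a rational point. The limiting step (iv) $\Rightarrow$ (iii) requires only care with the routine inputs that finitely generated cones are closed and that the unit sphere is compact.
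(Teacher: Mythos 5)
Your proposal is correct and follows essentially the same route as the paper: the forward implications are immediate from the inclusions, (iv) $\Rightarrow$ (iii) is the limit-cone/normalization argument, and the real content is the rational descent from (iii), which the paper does by noting that a positive-dimensional face of the intersection spans a $\mathbb{Q}$-rational subspace, while you invoke the rational Minkowski--Weyl theorem to get a rational generator directly --- the same underlying rationality fact. Your write-up is in fact slightly more complete, since you justify (via the rational-polyhedron fibre argument) why a rational point of $\posspan{R}S$ actually lies in $\posspan{Q}S$, a step the paper's proof elides.
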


\begin{proof}
The implications 
 (i) $\Leftrightarrow$ (ii) $\Rightarrow$ (iii) $\Rightarrow$ (iv) are obvious.
The implication (iv) $\Rightarrow$ (iii) is immediate
 by taking the limit cone.  
For the remaining implication
 (iii) $\Rightarrow$ (ii), 
 we observe that the condition (iii) holds if and only if 
 $\posspan{R} S \cap \posspan{R} T$ contains a face of positive dimension,  say $W'$.
We extend $W'$ to the equi-dimensional subspace
 $W$ in $\mathbb R^n$.
Then $W$ is defined over $\mathbb Q$, 
 hence 
$
\posspan{Q} S \cap \posspan{Q} T \supset W' \cap \mathbb Q^n 
 \neq \{0\}.
$
Thus we have proved (iii) $\Rightarrow$ (ii).
\end{proof}

\subsection{Regular functions on affine $K_{\mathbb{C}}$-varieties}
\label{subsec:2.1}

Let $\mathcal V$ be an irreducible affine $K_\Bbb C$-variety over $\Bbb C$.
Then the ring $\reg{\mathcal V}$ of regular functions is finitely generated.
We need some basic fact on the $K_\Bbb C$-module structure
 of $\Bbb C[\mathcal V]$.  

\begin{lemma}
\label{lem:2.1}
The $K$-support $\SSK{\reg{\mathcal V}}$ is
 a finitely generated submonoid of $\Lambda_+$, 
 that is, there exist finitely many $\lambda_1, \dots, \lambda_k \in \wtlat$
 such that
$$
   \SSK{\reg{\mathcal V}} = \posspan{Z}\{\lambda_1, \dots, \lambda_k\}.
$$
\end{lemma}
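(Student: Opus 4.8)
The plan is to identify the highest weight vectors in $\reg{\mathcal V}$ with invariants under a maximal unipotent subgroup, and then to read off $\SSK{\reg{\mathcal V}}$ from the grading of this invariant ring. Fix a Borel subgroup $B=T_{\mathbb C}U$ of $K_{\mathbb C}$ whose unipotent radical $U$ corresponds to the positive system $\Delta^+(\mathfrak k_{\mathbb C},\mathfrak t_{\mathbb C})$. Since $\reg{\mathcal V}$ is a rational (locally finite) $K_{\mathbb C}$-module and $K_{\mathbb C}$ is reductive, the multiplicity of $V_\mu$ in $\reg{\mathcal V}$ equals the dimension of the space of highest weight vectors of weight $\mu$, i.e. of $U$-fixed $T_{\mathbb C}$-weight vectors of weight $\mu$. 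Because $T_{\mathbb C}$ normalises $U$, the invariant ring $\reg{\mathcal V}^U$ is $T_{\mathbb C}$-stable and hence decomposes into $T_{\mathbb C}$-weight spaces $\reg{\mathcal V}^U=\bigoplus_\mu A_\mu$, where $A_\mu$ is precisely the space of highest weight vectors of weight $\mu$. Consequently $\dim_{\mathbb C}\Hom_K(V_\mu,\reg{\mathcal V})=\dim_{\mathbb C}A_\mu$ and $\SSK{\reg{\mathcal V}}=\set{\mu\in\wtlat}{A_\mu\neq\{0\}}$ is exactly the support of this grading.

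Next I would verify that $\SSK{\reg{\mathcal V}}$ is a submonoid, which is the Cartan-product step. The product of a $U$-invariant weight vector of weight $\lambda$ with one of weight $\mu$ is again $U$-invariant (invariants form a subring) and of weight $\lambda+\mu$, so $A_\lambda\cdot A_\mu\subseteq A_{\lambda+\mu}$; together with $1\in A_0$ this makes $\reg{\mathcal V}^U$ a $\wtlat$-graded algebra. Here irreducibility of $\mathcal V$ enters decisively: $\reg{\mathcal V}$ is an integral domain, so the product of two nonzero highest weight vectors is nonzero. Thus $A_\lambda\neq\{0\}$ and $A_\mu\neq\{0\}$ force $A_{\lambda+\mu}\neq\{0\}$, proving that $\SSK{\reg{\mathcal V}}$ is closed under addition and contains $0$.

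For finite generation I would invoke finite generation of the unipotent invariant ring. Although $\reg{\mathcal V}$ is a finitely generated $\mathbb C$-algebra and $U$ is algebraic, $U$ is not reductive, so Hilbert's finiteness theorem does not apply directly; the relevant input is the theorem of Hadziev and Grosshans that $U$ is a Grosshans subgroup, whence $\reg{\mathcal V}^U$ is again a finitely generated $\mathbb C$-algebra. As $\reg{\mathcal V}^U$ is $T_{\mathbb C}$-stable, its generators may be taken homogeneous, say weight vectors $g_1,\dots,g_k$ of weights $\mu_1,\dots,\mu_k\in\SSK{\reg{\mathcal V}}$ (each weight component of a $U$-invariant is again $U$-invariant, since a $T_{\mathbb C}$-stable subspace is the sum of its weight components). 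Every monomial $g_1^{a_1}\cdots g_k^{a_k}$ has weight $\sum_j a_j\mu_j$, so any nonzero $A_\mu$ forces $\mu\in\posspan{Z}\{\mu_1,\dots,\mu_k\}$; conversely, since $\reg{\mathcal V}$ is an integral domain each such monomial is nonzero, so every $\sum_j a_j\mu_j$ lies in $\SSK{\reg{\mathcal V}}$. Hence $\SSK{\reg{\mathcal V}}=\posspan{Z}\{\mu_1,\dots,\mu_k\}$, as claimed. The step I expect to be the main obstacle is exactly this finite generation of the non-reductive invariant ring $\reg{\mathcal V}^U$: it is not formal and rests on the Grosshans--Hadziev theory rather than on classical reductive invariant theory, while the remaining monoid bookkeeping and the submonoid property are elementary once the integral-domain property is used.
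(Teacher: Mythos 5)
Your proposal is correct and follows essentially the same route as the paper: both reduce the statement to finite generation of the algebra $\reg{\mathcal V}^{N(K_{\mathbb C})}$ of highest weight vectors and then read off the monoid from the weights of homogeneous generators, using irreducibility of $\mathcal V$ (the integral-domain property) to see that products of nonzero highest weight vectors are nonzero. The only difference is that where you cite the Grosshans--Hadziev theorem, the paper proves that finite generation inline via the transfer isomorphism $\bigl(\reg{K_{\mathbb C}/N(K_{\mathbb C})}\otimes\reg{\mathcal V}\bigr)^{K_{\mathbb C}}\simeq\reg{\mathcal V}^{N(K_{\mathbb C})}$ together with finite generation of $\reg{K_{\mathbb C}/N(K_{\mathbb C})}$ from Cartan multiplication.
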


For the convenience of the reader, 
 we review quickly its proof, 
 see \cite{xbrionc, xsjadv}.

\begin{proof}
We write $N(K_\Bbb C)$ for the maximal unipotent subgroup of $K_\Bbb C$
 corresponding to the positive system $\Delta^+(\frak k_{\mathbb{C}}, \frak t_{\mathbb{C}})$.  
Then the ring 
$
\reg{K_\Bbb C/N(K_\Bbb C)} \simeq \bigoplus_{\lambda \in \wtlat} V_\lambda
$
 is finitely generated since $V_\lambda V_\mu = V_{\lambda + \mu}$.
Then the left-hand side of the isomorphism:
$$
   \left(\reg{K_{\Bbb C}/N(K_\Bbb C)}\otimes\reg{\mathcal V}\right)^{K_{\Bbb C}}
   \simeq
   \reg{\mathcal V}^{N(K_\Bbb C)}
$$
 is finitely generated
 because $K_{\mathbb{C}}$ is reductive.  
Thus the ring $\reg{\mathcal V}^{N(K_\Bbb C)}$ is finitely generated,
 whence the $K$-support $\SSK{\reg{\mathcal V}}$ is finitely generated
 as a monoid.  
\end{proof}

\subsection{Hamiltonian actions and cotangent bundles}
\label{subsec:2.2}

Let $(M,\omega)$ be a symplectic manifold, 
 and $K$ a Lie group acting on $M$
 as symplectic diffeomorphisms.  
The action is called {\it{Hamiltonian}}
 if there exists a {\it{momentum map}}
 $\Phi \colon M \to {\mathfrak{k}}^{\ast}$
 with the property that
$
  d \Phi^Z = \iota(Z_M) \omega
$
for all $Z \in {\mathfrak{k}}$, 
where $Z_M$ denotes the vector field on $M$
 induced by $Z$, 
 and $\Phi^Z$ is the function on $M$
 defined by $\Phi^Z(m)=\Phi(m)(Z)$.  
The {\it{momentum set}} $\Delta(M)$ is defined by
\begin{equation}
\label{eqn:moment}
 \Delta(M):=\sqrt{-1} \Phi(M) \cap {\mathfrak{t}}_+^{\ast}.  
\end{equation}

Let $K'$ be a connected closed subgroup of $K$.  
The cotangent bundle $T^{\ast}(K/K')$ of the homogeneous space $K/K'$
 is given as a homogeneous vector bundle
 $K \times_{K'}({\mathfrak{k}}')^{\perp}$.  
Thus the symplectic manifold $T^{\ast}(K/K')$
 is a Hamiltonian $K$-space with moment map
\begin{equation}
\label{eqn:2.2.2}
   \Psi \colon T^*(K/K') \to \frak k^*,
   \quad
      (k, X) \mapsto \Ad^*(k) X.  
\end{equation}
Let $K_\Bbb C' \subset K_\Bbb C$ be the complexifications of $K' \subset K$.  
For the affine variety $K_{\mathbb{C}}/K_{\mathbb{C}}'$, 
 we take $\lambda_1, \dots, \lambda_k \in \wtlat$ 
 as in Lemma \ref{lem:2.1} such that
\begin{equation}
\label{eqn:2.2.1}
 \operatorname{Supp}_K(\reg{K_{\Bbb C}/{K_{\Bbb C}'}}) = \posspan{Z}\{\lambda_1, \dots, \lambda_k\}.  
\end{equation}

\begin{proposition}
\label{prop:2.2}
\begin{enumerate}
\item[{\rm{(1)}}]
The momentum set $\Delta(T^{\ast}(K/K'))$ is equal to $C_K(K')$.  
\item[{\rm{(2)}}]
$C_K(K')=\operatorname{A S}_K(C^{\infty}(K/K'))$.  
In particular,
 we have
\[
 C_K(K')
= \posspan{\Bbb R}\{\lambda_1, \dots, \lambda_k\}.  
\]
\end{enumerate}
\end{proposition}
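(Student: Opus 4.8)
The plan is to treat the two parts separately: part (1) follows by unwinding definitions, while part (2) is obtained by matching an algebraic computation of $\Kasym{C^{\infty}(K/K')}$ against a symplectic-convexity computation of the momentum set. For (1), I would simply combine the definition \eqref{eqn:moment} of $\Delta(M)$ with the explicit moment map \eqref{eqn:2.2.2}. Since $(\mathfrak{k}')^{\perp}$ is stable under $\Ad^{\ast}(K')$, the assignment $(k,X)\mapsto \Ad^{\ast}(k)X$ is well defined on the associated bundle $K\times_{K'}(\mathfrak{k}')^{\perp}$, and its image is exactly $\Ad^{\ast}(K)(\mathfrak{k}')^{\perp}$. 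Hence $\Delta(T^{\ast}(K/K'))=\sqrt{-1}\,\Ad^{\ast}(K)(\mathfrak{k}')^{\perp}\cap\mathfrak{t}_{+}^{\ast}$, which is $C_K(K')$ by \eqref{eqn:1.5.1}.

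For the representation-theoretic side of (2), I would first identify $\Kasym{C^{\infty}(K/K')}$ with $\posspan{R}\{\lambda_1,\dots,\lambda_k\}$. By the Peter--Weyl theorem and Frobenius reciprocity, $\SSK{C^{\infty}(K/K')}$ is the set of $\mu\in\Lambda_+$ with $(V_{\mu})^{K'}\neq\{0\}$. Because $K'$ is Zariski dense in $K_{\mathbb{C}}'$, one has $(V_{\mu})^{K'}=(V_{\mu})^{K_{\mathbb{C}}'}$, so this set coincides with $\SSK{\reg{K_{\mathbb{C}}/K_{\mathbb{C}}'}}$, which by Lemma \ref{lem:2.1} and \eqref{eqn:2.2.1} equals the finitely generated monoid $\posspan{Z}\{\lambda_1,\dots,\lambda_k\}$. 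Since the limit cone of a finitely generated monoid is the $\mathbb{R}_{\ge 0}$-span of its generators, this yields $\Kasym{C^{\infty}(K/K')}=\posspan{R}\{\lambda_1,\dots,\lambda_k\}$, which is the ``in particular'' clause modulo the remaining equality.

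It then remains to prove $C_K(K')=\posspan{R}\{\lambda_1,\dots,\lambda_k\}$, i.e.\ that the momentum set of part (1) agrees with the asymptotic highest-weight cone of $\reg{K_{\mathbb{C}}/K_{\mathbb{C}}'}$. My plan is to invoke the Cartan--Mostow decomposition $K_{\mathbb{C}}=K\exp(\sqrt{-1}\,\mathfrak{k})$, which furnishes a $K$-equivariant diffeomorphism $T^{\ast}(K/K')\cong K_{\mathbb{C}}/K_{\mathbb{C}}'$ carrying the cotangent moment map $\Psi$ onto the K\"ahler moment map of the $K$-action on the Stein manifold $K_{\mathbb{C}}/K_{\mathbb{C}}'$; in particular the two moment-map images in $\mathfrak{k}^{\ast}$ both equal $\Ad^{\ast}(K)(\mathfrak{k}')^{\perp}$, so the two momentum sets coincide. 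Granting this, the identity $C_K(K')=\posspan{R}\{\lambda_1,\dots,\lambda_k\}$ is precisely the assertion that the moment cone of an affine $K_{\mathbb{C}}$-variety equals the cone generated by the highest weights occurring in its ring of regular functions, which is the Brion--Guillemin--Sternberg--Sjamaar convexity theorem (\cite{xbrionc,xsjadv}) applied to $\mathcal{V}=K_{\mathbb{C}}/K_{\mathbb{C}}'$. Combining the three displayed equalities gives $C_K(K')=\Kasym{C^{\infty}(K/K')}=\posspan{R}\{\lambda_1,\dots,\lambda_k\}$.

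I expect the main obstacle to lie in this last step, namely verifying that the convexity theorem applies in the non-projective, \emph{affine} setting and that the K\"ahler moment image on $K_{\mathbb{C}}/K_{\mathbb{C}}'$ is genuinely $\Ad^{\ast}(K)(\mathfrak{k}')^{\perp}$, matching $\Psi$. A more hands-on alternative, avoiding the global symplectomorphism, is to prove the two inclusions separately: the inclusion $\posspan{R}\{\lambda_i\}\subseteq C_K(K')$ amounts to showing that $(V_{\mu})^{K'}\neq\{0\}$ forces the $K'$-moment map $\mathcal{O}_{\mu}\to(\mathfrak{k}')^{\ast}$ on the coadjoint orbit to vanish somewhere, i.e.\ $\mathcal{O}_{\mu}\cap(\mathfrak{k}')^{\perp}\neq\emptyset$ (the ``nonvanishing-implies-nonempty-reduction'' direction of quantization-commutes-with-reduction for $\mathcal{O}_{\mu}$), while the reverse inclusion realizes each rational ray of $\Ad^{\ast}(K)(\mathfrak{k}')^{\perp}\cap\mathfrak{t}_{+}^{\ast}$ by genuine $K$-types $V_{N\mu}$ with $N\to\infty$. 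This reverse inclusion is the delicate point, and it is exactly where the finiteness from Lemma \ref{lem:2.1} and the passage to the limit cone are indispensable, since the integral points of the monoid need not fill the cone (for $K=SU(2)$, $K'=T$, only even weights occur, yet $C_K(K')$ is the full ray).
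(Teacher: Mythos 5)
Your proposal is correct and follows essentially the same route as the paper: part (1) by unwinding the definitions \eqref{eqn:2.2.2} and \eqref{eqn:1.5.1}, and part (2) by identifying $T^{\ast}(K/K')$ with the affine variety $K_{\mathbb{C}}/K_{\mathbb{C}}'$ and invoking the affine convexity theorem --- the paper cites Sjamaar \cite[Thms.~4.9 and 7.6]{xsjadv} for precisely the chain $\Delta(T^{\ast}(K/K'))=\Delta(K_{\mathbb{C}}/K_{\mathbb{C}}')=\posspan{R}\{\lambda_1, \dots, \lambda_k\}$. The Peter--Weyl/Frobenius identification of $\Kasym{C^{\infty}(K/K')}$ with this cone, which you spell out via Lemma \ref{lem:2.1} and the limit cone of the monoid, is left implicit in the paper but is exactly the intended argument.
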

\begin{proof}
(1)\enspace
It follows from the definitions \eqref{eqn:2.2.2} and \eqref{eqn:1.5.1} that
\begin{equation}
\label{eqn:2.2.3}
  \Delta(T^{\ast}(K/K'))
  =\sqrt{-1}\Ad^*(K) (\frak k')^\perp \cap \frak t_+^*
 = C_K(K').
\end{equation}
\itm{(2)}
By Sjamaar \cite[Thms.~4.9 and 7.6]{xsjadv}, 
 we have
\[
  \Delta(T^{\ast}(K/K'))
  =\Delta(K_{\mathbb{C}}/K_{\mathbb{C}}')
  = \posspan{\Bbb R}\{\lambda_1, \dots, \lambda_k\}.
\]
Combining this with \eqref{eqn:2.2.3}, 
 we get the second statement.  
\end{proof}

\subsection{Associated varieties}
\label{subsec:2.3}
The associated varieties ${\mathcal{V}}(X)$ are 
 coarse approximation of ${\mathfrak{g}}$-modules $X$, 
 which we brought in \cite{xkdecoass}
 into an algebraic study of discretely decomposable restrictions
 of Harish-Chandra modules.  
In this section we collect some important properties 
 of associated varieties, 
 and reduce the $K'$-admissibility 
 of a Harish-Chandra module on ${\mathcal{V}}(X)$
 to that of the space of regular functions on ${\mathcal{V}}(X)$.

Let $\{U_j({\mathfrak{g}}_{\mathbb{C}})\}_{j \in {\mathbb{N}}}$
 be the standard increasing filtration
 of the universal enveloping algebra $U({\mathfrak{g}}_{\mathbb{C}})$.  
Suppose $X$ is a finitely generated ${\mathfrak{g}}$-module.  
Let $F$ be a finite set of generators, 
 and we set $X_j:=U_j({\mathfrak{g}}_{\mathbb{C}})F$. 
The graded algebra $\operatorname{gr} U({\mathfrak{g}}_{\mathbb{C}})
:=\bigoplus_{j \in {\mathbb{N}}} U_j({\mathfrak{g}}_{\mathbb{C}})/U_{j-1}({\mathfrak{g}}_{\mathbb{C}})$ is isomorphic to the symmetric algebra $S({\mathfrak{g}}_{\mathbb{C}})$
 by the Poincar{\'e}--Birkhoff--Witt theorem
 and we regard the graded module
$
  \operatorname{gr} X:=
  \bigoplus_{j \in {\mathbb{N}}} X_j/X_{j-1}
$
 as a $S({\mathfrak{g}}_{\mathbb{C}})$-module.  
Define
\begin{align*}
 \operatorname{Ann}_{S({\mathfrak{g}}_{\mathbb{C}})}(\operatorname{gr} X)
 :=&
  \{f \in S({\mathfrak{g}}_{\mathbb{C}}):
    f v=0\quad \text{for any $v \in \operatorname{gr}X$}\}, 
\\
 {\mathcal{V}}(X)
  :=&
  \{x \in {\mathfrak{g}}_{\mathbb{C}}^{\ast}:
    f(x)=0\quad 
\text{for any $f \in \operatorname{Ann}_{S({\mathfrak{g}}_{\mathbb{C}})}(\operatorname{gr}X)$}
\}.  
\end{align*}
Then ${\mathcal{V}}(X)$ does not depend on the choice of $F$, 
 and is called the {\it{associated variety}} of $X$.  
If $X$ is a Harish-Chandra module, 
 that is, 
 a \gk-module of finite length, 
 then the associated variety ${\mathcal{V}}(X)$ is a $K_{\mathbb{C}}$-stable closed subvariety
 of ${\mathcal{N}}({\mathfrak{p}}_{\mathbb{C}}^{\ast})$, 
 see \cite{xvoass}.

For two $K$-modules $X_1, X_2$,
 we use the notation from \cite{xkdecomp}, and write $X_1 \le_K X_2$ if 
$$
  \dim_{\mathbb{C}} \Hom_{K}(\tau, X_1) \le \dim_{\mathbb{C}} \Hom_K(\tau, X_2)
  \quad
  \text{for any }
  \tau \in \widehat{K}.
$$
\begin{lemma}
[{\cite[Prop.~3.3]{xkyoshi15}}]
\label{lem:2.3}
Let $X$ be a \gk-module of finite length,
 and $\mathcal V(X)$ the associated variety.
We write 
$
\mathcal V(X) = \overline{\mathcal O_1} \,\cup \cdots \cup \,\overline{\mathcal O_N}
$
 for the decomposition into irreducible components.
Then there exist finite-dimensional representations $F_j$ $(1 \le j \le N)$
 of $K$ such that
\begin{align}
\label{eqn:2.3.2}
   X  &\le_K \bigoplus_{j=1}^N \reg{\overline{\mathcal O_j}} \otimes F_j, 
\\
   X \otimes F_j^*  &\ge_K   \reg{\overline{\mathcal O_j}}
 \ \text{ for any } j \quad(1 \le j \le N). 
\label{eqn:2.3.3}
\end{align}
\end{lemma}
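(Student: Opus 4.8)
The plan is to replace $X$ by its associated graded module and then compare the latter, component by component, with the coordinate rings $\reg{\overline{\mathcal O_j}}$, proving the two inequalities separately and finally taking a single family $F_j$ that serves both. Fix a $K$-stable good filtration of $X$, so that $M:=\operatorname{gr}X$ is a finitely generated $K_{\mathbb C}$-equivariant module over $\reg{\mathfrak p_{\mathbb C}^*}=S(\mathfrak p_{\mathbb C})$ with $\operatorname{Supp}(M)=\mathcal V(X)=\overline{\mathcal O_1}\cup\cdots\cup\overline{\mathcal O_N}$. Since $K$ is reductive the filtration splits as $K$-modules, hence $X\simeq M$ as $K$-modules and it suffices to prove \eqref{eqn:2.3.2} and \eqref{eqn:2.3.3} for $M$. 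Throughout I use freely that $\le_K$ is transitive, is inherited by $K$-submodules and (again by reductivity) by $K$-quotients, and is preserved under tensoring with a fixed finite-dimensional $K$-module.

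For the upper bound \eqref{eqn:2.3.2}, let $I=I(\mathcal V(X))=\bigcap_j I(\overline{\mathcal O_j})$ be the (radical, $K_{\mathbb C}$-stable) ideal of the support, so that $I^mM=0$ for some $m$. I would filter $M\supset IM\supset\cdots\supset I^mM=0$. Each subquotient $I^kM/I^{k+1}M$ is annihilated by $I$, hence is a finitely generated $K_{\mathbb C}$-module over the reduced ring $\reg{\mathcal V(X)}=S(\mathfrak p_{\mathbb C})/I$, which embeds $K$-equivariantly into $\bigoplus_j\reg{\overline{\mathcal O_j}}$. Choosing a finite-dimensional $K$-module $U_k$ of generators gives a $K$-surjection $\reg{\mathcal V(X)}\otimes U_k\twoheadrightarrow I^kM/I^{k+1}M$, so $I^kM/I^{k+1}M\le_K \bigl(\bigoplus_j\reg{\overline{\mathcal O_j}}\bigr)\otimes U_k$. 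Summing over $k$ and using $M\simeq\bigoplus_k I^kM/I^{k+1}M$ as $K$-modules yields \eqref{eqn:2.3.2} with $F_j:=\bigoplus_k U_k$.

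For the lower bound \eqref{eqn:2.3.3}, the decisive extra structure is that each component is the closure of a \emph{single} $K_{\mathbb C}$-orbit, $\overline{\mathcal O_j}$ with $\mathcal O_j\simeq K_{\mathbb C}/H_j$. Using a $K_{\mathbb C}$-equivariant prime filtration of $M$ and localizing at the minimal prime $I(\overline{\mathcal O_j})$ (where $M$ has finite nonzero length, the multiplicity being $\ge 1$ since $\overline{\mathcal O_j}\subseteq\operatorname{Supp}M$), I would isolate a subquotient $N_j$ whose support is exactly $\overline{\mathcal O_j}$; over the dense orbit $\mathcal O_j$ it is the homogeneous bundle $K_{\mathbb C}\times_{H_j}\sigma$ attached to a nonzero $H_j$-representation $\sigma$. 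Because $\mathcal O_j$ is quasi-affine (a locally closed orbit in $\mathfrak p_{\mathbb C}^*$), the subgroup $H_j$ is observable in $K_{\mathbb C}$, so a finite-dimensional $K_{\mathbb C}$-module $F_j$ exists for which the trivial $H_j$-representation occurs in $(\sigma\otimes F_j^*)|_{H_j}$. This makes the structure sheaf of $\overline{\mathcal O_j}$ a subobject of $(N_j\otimes F_j^*)|_{\mathcal O_j}$, and the goal is to promote this to $\reg{\overline{\mathcal O_j}}\le_K N_j\otimes F_j^*\le_K M\otimes F_j^*$. Replacing $F_j$ by the direct sum of the modules produced in the two steps then makes one family serve both \eqref{eqn:2.3.2} and \eqref{eqn:2.3.3}.

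The hard part will be exactly this last promotion in the lower bound. A trivial sub- or quotient of the bundle over the \emph{open} orbit $\mathcal O_j$ produces, a priori, only a map onto a nonzero ideal of $\reg{\overline{\mathcal O_j}}$ (its ``trace ideal''), whose zero locus lies in the boundary $\overline{\mathcal O_j}\setminus\mathcal O_j$; equivalently, sections over the quasi-affine orbit may fail to extend to the closure. Recovering \emph{all} of $\reg{\overline{\mathcal O_j}}$, and not merely a proper $K$-stable subspace, therefore requires controlling this boundary. I expect to settle it by an equivariant generic-freeness argument on $K_{\mathbb C}/H_j$ together with the convexity/support results of Sjamaar \cite{xsjadv} already invoked in Proposition~\ref{prop:2.2}, which identify the asymptotic $K$-support of $\reg{\overline{\mathcal O_j}}$ with data read off on the orbit and so absorb the boundary contribution. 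Once this is in place, transitivity of $\le_K$ and the tensoring properties recorded above assemble the two estimates into the statement of the lemma.
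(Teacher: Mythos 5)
The paper does not actually prove Lemma~\ref{lem:2.3}; it quotes it from \cite[Prop.~3.3]{xkyoshi15}. Your reduction to $M:=\operatorname{gr}X$ and your proof of the upper bound \eqref{eqn:2.3.2} are sound and follow the standard line (filter by powers of the radical ideal of $\mathcal V(X)$, present each subquotient as a $K$-equivariant quotient of $\reg{\mathcal V(X)}\otimes U_k$, and use the embedding $\reg{\mathcal V(X)}\hookrightarrow\bigoplus_j\reg{\overline{\mathcal O_j}}$). The problem is the lower bound \eqref{eqn:2.3.3}, where the gap you flag is genuine and the route you propose for closing it would not close it. Passing to the open orbit $\mathcal O_j\simeq K_{\mathbb C}/H_j$ and producing a trivial $H_j$-constituent of $(\sigma\otimes F_j^*)|_{H_j}$ only gives an injection of $\reg{\mathcal O_j}$ into the space of sections of $N_j\otimes F_j^*$ \emph{over the quasi-affine orbit}, and that space of sections can have strictly larger $K$-multiplicities than $N_j\otimes F_j^*$ itself; so you obtain $\reg{\overline{\mathcal O_j}}\le_K\Gamma(\mathcal O_j,N_j\otimes F_j^*)$, which does not imply $\reg{\overline{\mathcal O_j}}\le_K N_j\otimes F_j^*$. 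Moreover, Sjamaar's convexity results cannot repair this: they control only the cone spanned by the $K$-support (the momentum set), whereas \eqref{eqn:2.3.3} is a multiplicity-by-multiplicity inequality, strictly finer information than any asymptotic-support statement.

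The missing idea is that one should not go to the open orbit at all; torsion-freeness over the domain $R_j:=\reg{\overline{\mathcal O_j}}$ already suffices. From your localization step extract a $K_{\mathbb C}$-equivariant subquotient of $M$ that is a finitely generated, \emph{torsion-free}, nonzero $R_j$-module $Q$ (kill the kernel of $M\to M_{\mathfrak p_j}$, reduce mod $\mathfrak p_j$, then quotient by the torsion submodule, which is $K_{\mathbb C}$-stable and proper since a finitely generated torsion module over a domain has nonzero annihilator). Choose any nonzero finite-dimensional $K$-submodule $F_j\subset Q$; the inclusion $\iota\colon F_j\hookrightarrow Q$ is a $K_{\mathbb C}$-invariant vector of $\operatorname{Hom}(F_j,Q)\simeq Q\otimes F_j^*$, and the $R_j$-module map $R_j\to Q\otimes F_j^*$, $r\mapsto r\cdot\iota$, is $K_{\mathbb C}$-equivariant and injective precisely because $Q$ is torsion-free and $F_j\neq 0$. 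Hence $\reg{\overline{\mathcal O_j}}\le_K Q\otimes F_j^*\le_K X\otimes F_j^*$, which is \eqref{eqn:2.3.3}; enlarging each $F_j$ by the module from your upper-bound step (both inequalities are monotone under enlarging $F_j$) gives a single family serving both estimates. In short: the upper bound in your proposal stands, but the lower bound as written is an unproved claim, and the boundary-control strategy you suggest attacks the wrong difficulty.
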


\subsection{Basic properties of asymptotic $K$-support}
\label{subsec:2.4}
We recall some basic properties of asymptotic $K$-support defined in \eqref{eqn:ASK}.
\begin{lemma}
\label{lem:2.4}
Let $X$ and $Y$ be $K$-modules.
\itm{(1)}
If $Y \le_K X$ then $\Kasym{Y} \subset \Kasym{X}$.
\itm{(2)}
 $\Kasym{X} = \Kasym{X \otimes F}$
 for any finite-dimensional representation $F$ of $K$.
\itm{(3)}
$\Kasym{X \oplus Y} = \Kasym{X} \cup \Kasym{Y}$.
\end{lemma}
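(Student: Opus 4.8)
The plan is to reduce all three assertions to two elementary observations: that the $K$-support $\SSK{\cdot}$ behaves monotonically, and that the limit-cone operation $S \mapsto S\infty$ is both monotone and insensitive to bounded perturbations of $S$. Throughout I use that $\SSK{\cdot}$ is determined by which $\Hom_K(V_\mu, \cdot)$ are nonzero, and I record the \emph{bounded-perturbation principle}: if $B$ is a bounded subset of the ambient Euclidean space, then $(S+B)\infty = S\infty$. This holds because, given $\mu_j' = \mu_j + b_j$ with $b_j \in B$ bounded and $\varepsilon_j \to 0$, one has $\varepsilon_j b_j \to 0$, so $\lim_{j\to\infty}\varepsilon_j\mu_j' = \lim_{j\to\infty}\varepsilon_j\mu_j$ whenever either limit exists; thus adding a bounded term alters no limit defining $S\infty$.

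First I would dispose of (1). The relation $Y \le_K X$ means $\dim_{\mathbb C}\Hom_K(V_\mu, Y) \le \dim_{\mathbb C}\Hom_K(V_\mu, X)$ for every $\mu \in \wtlat$, so $\Hom_K(V_\mu, Y)\neq\{0\}$ forces $\Hom_K(V_\mu, X)\neq\{0\}$; hence $\SSK{Y}\subset\SSK{X}$. Since $S\subset S'$ gives $S\infty\subset S'\infty$ immediately from the definition of the limit cone, we conclude $\Kasym{Y}\subset\Kasym{X}$. For (3) I would note $\SSK{X\oplus Y}=\SSK{X}\cup\SSK{Y}$, as $\Hom_K(V_\mu, X\oplus Y)=\Hom_K(V_\mu,X)\oplus\Hom_K(V_\mu,Y)$ is nonzero precisely when one summand is, and then verify $(S\cup T)\infty=S\infty\cup T\infty$. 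The inclusion $\supset$ is monotonicity; for $\subset$, given $\lim_{j\to\infty}\varepsilon_j\mu_j$ with $\mu_j\in S\cup T$, the pigeonhole principle places infinitely many $\mu_j$ in $S$ (or in $T$), and passing to that subsequence yields the same limit, lying in $S\infty$ (resp.\ $T\infty$).

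The crux is (2). For the forward inclusion, decompose $V_\mu\otimes F\cong\bigoplus_{\mu'}V_{\mu'}$; every highest weight $\mu'$ occurring differs from $\mu$ by a weight of $F$, so $\SSK{X\otimes F}\subset\SSK{X}+\mathrm{wt}(F)$, where $\mathrm{wt}(F)$ is finite, hence bounded. The bounded-perturbation principle then gives $\Kasym{X\otimes F}\subset\Kasym{X}$. The only point requiring a small trick is the reverse inclusion: since the trivial representation is a direct summand of $F\otimes F^{*}$, the module $X$ is a summand of $X\otimes F\otimes F^{*}$, so $X\le_K (X\otimes F)\otimes F^{*}$. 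Applying (1) and then the forward inclusion already established, now with the pair $(X\otimes F, F^{*})$, yields $\Kasym{X}\subset\Kasym{(X\otimes F)\otimes F^{*}}\subset\Kasym{X\otimes F}$, completing the equality. No step here is genuinely deep; the main thing to get right is precisely this summand argument for the reverse inclusion, together with the observation that $\mathrm{wt}(F)$ being finite is what makes the perturbation principle applicable.
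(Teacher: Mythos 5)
Your proof is correct and follows essentially the same route as the paper: parts (1) and (3) are exactly the paper's one-line observations ($\SSK{Y}\subset\SSK{X}$, resp.\ $(S\cup T)\infty=S\infty\cup T\infty$), and for part (2), which the paper merely outsources to a citation of \cite[Lem.~3.1]{xkdecoalg}, your bounded-perturbation argument together with the trick that $\mathbf{1}$ is a summand of $F\otimes F^{*}$ (so $X\le_K X\otimes F\otimes F^{*}$) is precisely the standard proof found there. Nothing to correct.
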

\begin{proof}
(1)\enspace Clear from $\SSK{Y} \subset \SSK{X}$.
\newline
(2)\enspace
See \cite[Lem.~3.1]{xkdecoalg}.  
\itm{(3)}
Immediate from $(S \cup T)\infty = S\infty \cup T\infty$
 for any subsets $S$ and $T$.  
\end{proof}

\subsection{Asymptotic $K$-supports of Harish-Chandra modules}
\label{subsec:2.5}
The asymptotic $K$-support
 $\Kasym{X}$ 
 of a Harish-Chandra module $X$
 is determined by its associated variety ${\mathcal{V}}(X)$, 
 and is a finite union of convex polyhedral cones.  
These properties will be used in the proof of Theorem \ref{thm:A}.

Suppose we are in the setting of Lemma \ref{lem:2.3}.
For each irreducible component $\overline{\mathcal O_j}$
 of the associated variety ${\mathcal{V}}(X)$,
 we take a finite set 
 $S_j:=\{\beta_1, \dots, \beta_{k_j}\}$ so that
$
 \SSK{\reg{\overline{\mathcal O_j}}} = \posspan{Z} S_j 
$
as in Lemma \ref{lem:2.1}.
Taking the limit cone, we have:
\begin{equation}
\label{eqn:2.5.2}
\Kasym{\reg{\overline{\mathcal O_j}}} = \posspan{R} S_j.
\end{equation}
\begin{proposition}
\label{prop:2.5}
Let $X$ be a \gk-module of finite length,
 and $S_j$ $(1 \le j \le N)$ finite subsets of $\wtlat$ as above.
 Then,
$\Kasym{X} = \Kasym{{\mathbb{C}}[{\mathcal{V}}(X)]}
           = \bigcup_{j=1}^N \posspan{\Bbb R}S_j$.
\end{proposition}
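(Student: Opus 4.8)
The goal is to prove that for a $(\mathfrak{g},K)$-module $X$ of finite length,
$$
\Kasym{X} = \Kasym{\reg{\mathcal V(X)}} = \bigcup_{j=1}^N \posspan{\Bbb R} S_j.
$$
The plan is to use the sandwiching estimates of Lemma~\ref{lem:2.3} to squeeze $\Kasym{X}$ between the asymptotic supports of the rings of regular functions on the irreducible components, and then identify the result with the union of the cones $\posspan{\Bbb R} S_j$.

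First I would establish the second equality, which is essentially bookkeeping. Since $\mathcal V(X) = \overline{\mathcal O_1} \cup \cdots \cup \overline{\mathcal O_N}$, the $K$-support of $\reg{\mathcal V(X)}$ is contained in the union of the $K$-supports of the $\reg{\overline{\mathcal O_j}}$ (restriction of functions) and dominates each of them, so by Lemma~\ref{lem:2.4}(1),(3) we get
$$
\Kasym{\reg{\mathcal V(X)}} = \bigcup_{j=1}^N \Kasym{\reg{\overline{\mathcal O_j}}} = \bigcup_{j=1}^N \posspan{\Bbb R} S_j,
$$
where the last step is precisely the identity \eqref{eqn:2.5.2} coming from Lemma~\ref{lem:2.1}. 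This gives the right-hand equality directly.

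The heart of the argument is the first equality, $\Kasym{X} = \Kasym{\reg{\mathcal V(X)}}$, and here I would feed the two inequalities of Lemma~\ref{lem:2.3} into the monotonicity and tensor-invariance properties of Lemma~\ref{lem:2.4}. From \eqref{eqn:2.3.2}, $X \le_K \bigoplus_j \reg{\overline{\mathcal O_j}} \otimes F_j$, so Lemma~\ref{lem:2.4}(1) gives $\Kasym{X} \subset \Kasym{\bigoplus_j \reg{\overline{\mathcal O_j}} \otimes F_j}$; applying parts (3) and (2) turns the right-hand side into $\bigcup_j \Kasym{\reg{\overline{\mathcal O_j}}}$, i.e.\ $\Kasym{X} \subset \bigcup_j \posspan{\Bbb R} S_j$. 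Conversely, from \eqref{eqn:2.3.3}, $X \otimes F_j^* \ge_K \reg{\overline{\mathcal O_j}}$ for each $j$, so Lemma~\ref{lem:2.4}(1) yields $\Kasym{\reg{\overline{\mathcal O_j}}} \subset \Kasym{X \otimes F_j^*}$, and Lemma~\ref{lem:2.4}(2) identifies the latter with $\Kasym{X}$; taking the union over $j$ gives $\bigcup_j \posspan{\Bbb R} S_j \subset \Kasym{X}$. Combining the two inclusions closes the argument.

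The main point to watch is that the entire proof is a clean application of the already-assembled machinery, so the only real obstacle is ensuring the finite-dimensional twists $F_j$ and $F_j^*$ are correctly absorbed: the tensor-invariance of Lemma~\ref{lem:2.4}(2) is exactly what allows the factors $F_j$ in \eqref{eqn:2.3.2} and $F_j^*$ in \eqref{eqn:2.3.3} to disappear at the level of asymptotic supports, so that the limit cones match on both sides. No genuinely new estimate is needed beyond verifying that these lemmas compose as stated.
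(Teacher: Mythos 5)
Your proposal is correct and follows essentially the same route as the paper: both inclusions come from feeding the two estimates \eqref{eqn:2.3.2} and \eqref{eqn:2.3.3} of Lemma~\ref{lem:2.3} into the monotonicity, tensor-invariance, and union properties of Lemma~\ref{lem:2.4}, and then invoking \eqref{eqn:2.5.2}. Your extra remark justifying the middle term $\Kasym{{\mathbb{C}}[{\mathcal{V}}(X)]}$ via the restriction maps to the irreducible components is a small addition the paper leaves implicit, but it does not change the argument.
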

\begin{proof}
By Lemmas \ref{lem:2.3} and \ref{lem:2.4}, we have
$$
\Kasym{X} \subset \bigcup_{j=1}^N \Kasym{\reg{\overline{\mathcal O_j}}\otimes F_j}
         = \bigcup_{j=1}^N \Kasym{\reg{\overline{\mathcal O_j}}}.
$$
Again, by Lemmas \ref{lem:2.3} and \ref{lem:2.4},
 we get the reverse inclusion:
$$
\Kasym{X} = \Kasym{X \otimes F_j^*} \supset \Kasym{\reg{\overline{\mathcal O_j}}}.
$$
By \eqref{eqn:2.5.2}, we obtain Proposition \ref{prop:2.5}.
\end{proof}

We note that $\Kasym{X} =\{0\}$ 
 if and only if $\operatorname{Supp}_K(X)$ is a finite set.  
When $X$ is a \gk-module of finite length, 
 this is equivalent to the condition 
${\mathcal{V}}(X)=\{0\}$, 
 or equivalently,
 $\dim_{\mathbb{C}} X< \infty$.  

\subsection{Transversality of the $K$-supports of two $K$-modules}
\label{subsec:2.6}
In this section
 we formulate the 
\lq\lq{stability of the transversality}\rq\rq\
 of the $K$-supports of two $K$-modules
 under taking the tensor product 
 with finite-dimensional representations.  
For given set $S$, 
 we denote by $\sharp S$ the cardinality of $S$.  

\begin{lemma}
\label{lem:2.6}
Let $X$ and $Y$ be $K$-modules.
\itm{\rm{(1)}}
For any finite-dimensional $K$-module $F$, we have
\[
   \sharp\left(\mSSK{X} \cap \mSSK{Y \otimes F}\right)
   \le
   \dim_{\mathbb{C}}  F \ \sharp\left(\mSSK{X \otimes F^*} \cap \mSSK{Y}\right).
\]
\itm{\rm{(2)}}
The following two conditions are equivalent:
\itm{\hphantom{MM} (i)}
$\sharp\left(\mSSK{X \otimes F^*} \cap \mSSK{Y}\right) < \infty$
 for any finite-dimensional representation $F$ of $K$.
\itm{\hphantom{MM} (ii)}
$\sharp\left(\mSSK{X \otimes F_1} \cap \mSSK{Y \otimes F_2}\right) < \infty$
 for any finite-dimensional representations $F_1$ and $F_2$ of $K$.
\end{lemma}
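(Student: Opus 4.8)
The plan is to prove both parts by exploiting the interplay between tensoring a $K$-module with a finite-dimensional representation and the Weyl group / weight combinatorics governing how highest weights shift. For part (1), the key observation is that tensoring with $F$ can only spread out $K$-support by a controlled amount: if $V_\mu$ occurs in $Y \otimes F$, then by the standard decomposition of tensor products, $\mu$ must be of the form $\nu + \gamma$ where $V_\nu \le_K Y$ and $\gamma$ is a weight of $F$. Conversely, if $V_\mu$ occurs in $X$ and also in $Y \otimes F$, then writing $\mu = \nu + \gamma$ as above forces $V_\nu$ to occur in $X \otimes F^\ast$ (since $V_\mu \otimes F^\ast \supset V_\nu$ when $V_\mu \subset V_\nu \otimes F$, by Frobenius reciprocity / the self-duality of $\Hom$-counting under $\otimes F^\ast$). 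Thus every element $\mu$ of $\SSK{X} \cap \SSK{Y \otimes F}$ arises from some $\nu \in \SSK{X \otimes F^\ast} \cap \SSK{Y}$ together with a choice of weight $\gamma$ of $F$. Since $F$ has at most $\dim_{\mathbb{C}} F$ weights (counted with multiplicity), the fiber of the assignment $\mu \mapsto \nu$ has size at most $\dim_{\mathbb{C}} F$, which yields the claimed cardinality bound.

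For part (2), I would establish the equivalence of (i) and (ii) by showing each is a finiteness statement that is stable under the relevant tensoring operations, and then deduce both directions from part (1). The implication (ii) $\Rightarrow$ (i) is immediate: taking $F_1$ to be the trivial representation and $F_2 = F^\ast$ specializes (ii) to (i). For the reverse implication (i) $\Rightarrow$ (ii), I would apply the cardinality inequality of part (1) with the roles of the modules suitably arranged. Specifically, replacing $X$ by $X \otimes F_1$ and $Y$ by $Y \otimes F_2$, and taking the finite-dimensional module in part (1) to absorb the extra factors, one bounds $\sharp(\SSK{X \otimes F_1} \cap \SSK{Y \otimes F_2})$ by a finite multiple of an intersection of the form $\sharp(\SSK{X \otimes G^\ast} \cap \SSK{Y})$ for a single finite-dimensional $G$ (built from $F_1$ and $F_2$), which is finite by hypothesis (i).

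The step I expect to be the main obstacle is the careful bookkeeping in the reduction for (i) $\Rightarrow$ (ii): one must track how a two-sided tensoring $\SSK{X \otimes F_1} \cap \SSK{Y \otimes F_2}$ can be transferred to a one-sided condition. The clean route is to note that $V_\mu \le_K X \otimes F_1$ and $V_\mu \le_K Y \otimes F_2$ together mean, after tensoring the first condition with $F_1^\ast$, that $X \ge_K$ contains some constituent whose highest weight lies in a bounded translate of $\mu$; applying part (1) with $F = F_1 \otimes F_2$ (or a suitable single module dominating both shifts) then reduces the finiteness of the two-sided intersection to that of $\SSK{X \otimes (F_1 \otimes F_2)^\ast} \cap \SSK{Y}$, which is covered by (i). The only delicate point is ensuring that the weight-shift fibers remain uniformly bounded so that finiteness is genuinely preserved; this is exactly what the inequality in part (1) guarantees, so part (2) follows formally once part (1) is in hand.
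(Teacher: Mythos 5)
Your proof is correct and follows essentially the same route as the paper: for part (1) the non-canonical assignment $\mu \mapsto \nu$ with $\mu = \nu + \gamma$ and fibers of cardinality at most $\dim_{\mathbb{C}} F$, and for part (2) the formal reduction to part (1) by packaging $F_1$ and $F_2$ into a single finite-dimensional module (the paper takes $F = F_1 \otimes F_2^{\ast}$). One trivial slip: to specialize (ii) to (i) you should take $F_1 = F^{\ast}$ and $F_2$ trivial, not the reverse, since (i) concerns $\SSK{X \otimes F^{\ast}} \cap \SSK{Y}$; this does not affect the argument.
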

\begin{proof}
(1)\enspace
Suppose $\mu \in \SSK{X} \cap \SSK{Y \otimes F}$.
Since $V_\mu$ occurs in $V_\nu \otimes F$ for some
 $\nu \in  \SSK{Y}$,
 one finds a weight $v$ of $F$ such that 
\begin{equation}
\label{eqn:2.6.1}
  \mu = \nu + v.
\end{equation}
Then we have
$
 \Hom_K(V_\nu, X \otimes F^*) = \Hom_K(V_\nu \otimes F, X)
 \supset \Hom_K(V_\mu, V_\mu) \neq \{0\}.
$
Hence $\nu \in \SSK{X \otimes F^*}$.
The above consideration yields to a (non-canonical) map
\begin{equation}
\label{eqn:XFY}
 \mSSK{X} \cap \mSSK{Y \otimes F}
 \to
\mSSK{X \otimes F^*} \cap \mSSK{Y},
\quad
 \mu \mapsto \nu
\end{equation}
 with constraints \eqref{eqn:2.6.1}.
The cardinality of
 each fiber of the map \eqref{eqn:XFY} bounded by $\dim F$.
Hence (1) is proved.
\itm{(2)} The second assertion is a direct consequence of (1)
 by setting $F=F_1 \otimes F_2^{\ast}$.  
\end{proof}

\subsection{Admissible restriction and regular functions
 on $K_{\mathbb{C}}/K_{\mathbb{C}}'$}
\label{subsec:2.7}
Let $K'$ be a closed subgroup of a compact Lie group $K$,
 and $K'_\Bbb C \subset K_\Bbb C$ be their complexifications.  
In this section
 we relate $K'$-admissibility of the restriction 
 of a $K$-module with the $K$-support of the space
 ${\mathbb{C}}[K_{\mathbb{C}}/K_{\mathbb{C}}']$
 of regular functions on $K_{\mathbb{C}}/K_{\mathbb{C}}'$.  

\begin{lemma}
\label{lem:2.7}
The following three conditions on a $K$-module $X$ are
 equivalent:
\itm{\,\,(i)}
 $X$ is \adm{K'}.
\itm{\,(ii)}
$X \otimes F'$ is \adm{K'}
 for any finite-dimensional representation  $F'$ of $K'$.
\itm{(iii)}
 $X$ is \adm{K}, 
 and for any finite-dimensional representation  $F$ of $K$,
\begin{equation}
\label{eqn:SStrans}
 \sharp \left(\mSSK{X \otimes F} 
 \cap \mSSK{\Bbb C[K_{\Bbb C}/K'_{\Bbb C}]}\right) < \infty.
\end{equation}
\end{lemma}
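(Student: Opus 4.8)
The plan is to reduce every $K'$-type to the trivial one via the reciprocity $\Hom_{K'}(\tau,X)\cong (X\otimes\tau^*)^{K'}$, and to read off the multiplicity of the trivial $K'$-type in a $K$-module from the intersection $\SSK{X}\cap\SSK{\reg{K_{\mathbb C}/K'_{\mathbb C}}}$. First I would dispose of (i) $\Leftrightarrow$ (ii), which is pure Frobenius reciprocity. The implication (ii) $\Rightarrow$ (i) is the case $F'=\mathbf 1$. For (i) $\Rightarrow$ (ii), given a finite-dimensional $K'$-module $F'$ and $\tau\in\widehat{K'}$, one has $\Hom_{K'}(\tau,X\otimes F')\cong\Hom_{K'}(\tau\otimes (F')^*,X)$; since $\tau\otimes (F')^*$ is a finite direct sum of irreducibles of $K'$, the right-hand side is finite-dimensional whenever $X$ is \adm{K'}, so $X\otimes F'$ is \adm{K'}.

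The heart of the matter is a single computation. Write $Y:=\reg{K_{\mathbb C}/K'_{\mathbb C}}$. Because $K'_{\mathbb C}$ is reductive the variety $K_{\mathbb C}/K'_{\mathbb C}$ is affine, and the algebraic Peter--Weyl theorem gives a $K$-isomorphism $Y\cong\bigoplus_{\mu\in\wtlat}V_\mu\otimes (V_\mu^*)^{K'_{\mathbb C}}$. Since $K'$ is Zariski dense in $K'_{\mathbb C}$ and the trivial representation is self-dual, $\dim (V_\mu^*)^{K'_{\mathbb C}}=\dim V_\mu^{K'}$, whence $\SSK{Y}=\{\mu\in\wtlat:V_\mu^{K'}\neq\{0\}\}$. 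Consequently, for any $K$-module $X=\bigoplus_\mu m_\mu V_\mu$,
\[
\dim_{\mathbb C}X^{K'}=\sum_{\mu}m_\mu\dim V_\mu^{K'}=\sum_{\mu\in\SSK{X}\cap\SSK{Y}}m_\mu\dim V_\mu^{K'}.
\]
When $X$ is \adm{K} every $m_\mu$ is finite, so each surviving summand is a positive integer; therefore $\dim X^{K'}<\infty$ if and only if $\SSK{X}\cap\SSK{Y}$ is a finite set. This is the bridge between the finiteness of the trivial $K'$-multiplicity and the combinatorial condition in (iii).

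Next I would record that (i) forces $X$ to be \adm{K}: for each $\mu\in\SSK{X}$ choose an irreducible $\tau\subset V_\mu|_{K'}$, so that $m_\mu\le\dim\Hom_{K'}(\tau,X)<\infty$. With this the two remaining implications are short. For (i) $\Rightarrow$ (iii): by (ii), $X\otimes F$ is \adm{K'} for every finite-dimensional $K$-module $F$, hence $\dim (X\otimes F)^{K'}<\infty$; as $X\otimes F$ is also \adm{K}, the boxed computation gives $\sharp(\SSK{X\otimes F}\cap\SSK{Y})<\infty$. For (iii) $\Rightarrow$ (i): given $\tau\in\widehat{K'}$, pick a finite-dimensional $K$-module $F$ with $\tau^*\subset F|_{K'}$ (such $F$ exists since every irreducible of $K'$ occurs in $V_\mu|_{K'}$ for some $\mu$, by Frobenius reciprocity). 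Then $X\otimes\tau^*$ is a $K'$-direct summand of $(X\otimes F)|_{K'}$, so
\[
\dim\Hom_{K'}(\tau,X)=\dim (X\otimes\tau^*)^{K'}\le\dim (X\otimes F)^{K'},
\]
and the latter is finite by the computation, since (iii) supplies both $K$-admissibility of $X$ (hence of $X\otimes F$) and the finiteness of $\SSK{X\otimes F}\cap\SSK{Y}$. Thus $X$ is \adm{K'}.

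The main obstacle I expect is the careful bookkeeping of $K$-admissibility: the equivalence ``finite trivial $K'$-multiplicity $\Leftrightarrow$ finite support intersection'' is valid only when the ambient multiplicities $m_\mu$ are finite, since a single infinite $m_\mu$ could make $\dim X^{K'}$ infinite while $\SSK{X}\cap\SSK{Y}$ stays finite. I must therefore guarantee $K$-admissibility at each appeal to the computation --- deriving it from (i) in one direction and reading it from the hypothesis of (iii) in the other. The only other point needing explicit justification is the identification $\SSK{Y}=\{\mu:V_\mu^{K'}\neq\{0\}\}$, which rests on the affineness of $K_{\mathbb C}/K'_{\mathbb C}$ and the density of $K'$ in $K'_{\mathbb C}$; both are standard but should be stated.
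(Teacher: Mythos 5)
Your proof is correct. The equivalence (i) $\Leftrightarrow$ (ii) and the derivation of \eqref{eqn:SStrans} from (ii) coincide with the paper's argument: the paper likewise bounds $\sharp\left(\SSK{X\otimes F}\cap \SSK{\reg{K_{\mathbb C}/K'_{\mathbb C}}}\right)$ by $\dim_{\mathbb C}\Hom_{K'}({\bf 1},X\otimes F)$, using implicitly the identification $\SSK{\reg{K_{\mathbb C}/K'_{\mathbb C}}}=\{\mu: V_\mu^{K'}\neq\{0\}\}$ that you make explicit via algebraic Peter--Weyl. The genuine divergence is in (iii) $\Rightarrow$ (i): the paper fixes $\tau\in\widehat{K'}$ and $F$, introduces $\Ind_{K'}^K\tau$, dominates it by $\reg{K_{\mathbb C}/K'_{\mathbb C}}\otimes F_1$ for some finite-dimensional $K$-module $F_1$ with $\tau\subset F_1|_{K'}$, and then must invoke Lemma \ref{lem:2.6} (2) to transfer the auxiliary factor $F_1$ from the $\reg{K_{\mathbb C}/K'_{\mathbb C}}$ side of the intersection to the $X$ side, where hypothesis (iii) --- quantified over all $F$ --- applies; the payoff is the exact multiplicity formula \eqref{eqn:2.7.3}. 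You instead choose $F$ with $\tau^*\subset F|_{K'}$ and bound $\dim\Hom_{K'}(\tau,X)=\dim(X\otimes\tau^*)^{K'}$ directly by $\dim(X\otimes F)^{K'}$, placing the twist on the $X$ side from the outset; this yields only an upper bound rather than an identity, but it makes Lemma \ref{lem:2.6} superfluous for this lemma and shortens the argument. Your bookkeeping of where $K$-admissibility is required (to exclude an infinite isotypic multiplicity sitting over a finite support intersection) is exactly the point that needs care, and you handle it correctly in both directions.
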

\begin{proof}
The implication (i) $\Leftarrow$ (ii) is obvious.
\newline
(i) $\Rightarrow$ (ii):
Suppose (i) holds.
Then for any $\tau \in \widehat{K'}$, we have
$$
   \dim_{\mathbb{C}} \Hom_{K'}(\tau, X \otimes F')
  =
   \dim_{\mathbb{C}} \Hom_{K'}(\tau \otimes (F')^*, X) < \infty
$$
 because 
 $\tau \otimes (F')^*$ is a finite direct sum of irreducible $K'$-modules.
Hence (ii) is proved.
\newline
(ii) $\Rightarrow$ (iii):\enspace  
The $K$-admissibility is obvious from the $K'$-admissibility.  
Let us verify \eqref{eqn:SStrans}.  
Let ${\bf{1}}$ denote the one-dimensional trivial representation of $K$.  
Then we have
\[
   \sharp \{ \mu \in \operatorname{Supp}_K (X \otimes F):
            \Hom_{K'}({\bf{1}}, \mu|_{K'}) \ne \{0\}\}
  \le 
   \dim_{\mathbb{C}} \Hom_{K'}({\bf{1}}, X \otimes F), 
\]
which is finite by the condition (ii).  
Hence \eqref{eqn:SStrans} holds. 
\newline
(iii) $\Rightarrow$ (ii):
Fix any $\tau \in \widehat{K'}$, and any finite-dimensional representation
 $F$ of $K$.
Let $\Ind_{K'}^K \tau$ be an (algebraically) induced representation.
We define a subset of $\widehat{K}$ by
\begin{equation}
\label{eqn:2.7.1}
  {\mathcal{P}}:=   \SSK{\Ind_{K'}^K \tau} \cap \SSK{X \otimes F}.
\end{equation}
We claim ${\mathcal{P}}$ is a finite set.
To see this, we take a finite-dimensional $K$-module $F_1$
 such that $\Hom_{K'}(\tau, F_1|_{K'}) \neq \{0\}$.
Then, we have 
$$
     \Ind_{K'}^K \tau \le_K 
     \Ind_{K'}^K (F_1|_{K'})
     \simeq
     \Bbb C[K_{\Bbb C}/K'_{\Bbb C}] \otimes F_1
$$ 
as $K$-modules.
In particular,
 we have
\begin{equation}
\label{eqn:2.7.2}
   {\mathcal{P}} \subset 
   \SSK{\Bbb C[K_\mathbb C/K'_\mathbb C] \otimes F_1}
    \cap  \SSK{X \otimes F}.  
\end{equation}
The right-hand side of \eqref{eqn:2.7.2} is a finite set
 by the assumption (iii) and Lemma \ref{lem:2.6} (2).
Therefore, ${\mathcal{P}}$ is a finite set.

Next, let us consider the following equation:
\begin{equation}
\label{eqn:2.7.3}
  \dim_{\mathbb{C}} \Hom_{K'}(\tau, X \otimes F) 
= \sum_{\mu \in \widehat{K}}
     \dim_{\mathbb{C}} \Hom_{K'}(\tau, \mu)
    \ \dim_{\mathbb{C}} \Hom_{K}(\mu, X \otimes F).  
\end{equation}
The summation in \eqref{eqn:2.7.3} is actually taken 
 over the finite set ${\mathcal{P}}$.
Furthermore,
 each summand is finite because $X \otimes F$ is \adm{K}.
Hence, \eqref{eqn:2.7.3} is finite.
This means that $X \otimes F$ is \adm{K'}.
Since $F$ is an arbitrary finite-dimensional representation  of $K$,
 (ii) follows.
\end{proof}

\subsection{Proof of Theorem \ref{thm:A}}
\label{subsec:2.9}
We are ready to complete the proof of the main result of this article.  

\begin{proof}
[Proof of Theorem \ref{thm:A}]
Let ${\mathcal{V}}(X)$ be the associated variety of a \gk-module $X$, 
 and ${\mathcal{V}}(X)=\overline{\mathcal{O}_1} \cup \cdots \cup \overline{\mathcal{O}_N}$
 the decomposition into irreducible components
 ({\it{cf}}.~\cite{xkosrallis}).  
By Lemma \ref{lem:2.1}, 
 there are finite subsets
 $S_1$, $\cdots$, $S_N$ and $T$ such that
\[
\begin{cases}
   &\operatorname{Supp}_K({\mathbb{C}}[\overline{\mathcal{O}_j}])
   =  
  \posspan{Z} S_j\quad (1 \le j \le N), 
\\
  &\operatorname{Supp}_K({\mathbb{C}}[K_{\mathbb{C}}/K_{\mathbb{C}}'])
   =  
  \posspan{Z} T.  
\end{cases}
\]
 In place of the conditions (i) and (ii) in Theorem \ref{thm:A},
 we consider the following conditions:
\newline
(i)$'$: $\sharp\left( \SSK{X \otimes F} \cap \SSK{\reg{K_\mathbb C/K'_{\mathbb C}}}
 \right) < \infty$ for any finite-dimensional representation $F$ of $K$.
\newline
(ii)$'$: $\posspan{R}S_j \cap \posspan{R}T = \{0\}$ for any $j = 1, \dots, N$.

We already know the equivalence (i) $\Leftrightarrow$ (i)$'$ from Lemma \ref{lem:2.7},
 and the equivalence (ii) $\Leftrightarrow$ (ii)$''$ from
 Propositions \ref{prop:2.2} and \ref{prop:2.5}.  
Thus, the proof of Theorem~\ref{thm:A} will be completed if we show
 the equivalence (i)$'$  $\Leftrightarrow$ (ii)$'$.
\newline
(i)$'$ $\Rightarrow$ (ii)$'$:
If (i)$'$ holds, then Lemma \ref{lem:2.3} implies 
$$
  \sharp
 \left(\SSK{\reg{\overline{\mathcal O_j}}} \cap \SSK{\reg{K_\mathbb C/K'_\mathbb C}}\right)
 < \infty, 
$$
or equivalently,
$
 \sharp\left(\posspan{Z}S_j \cap \posspan{Z}T\right) < \infty, 
$
 whence the condition (ii)$'$ follows from Lemma \ref{lem:2.8}.
\newline
 (ii)$'$  $\Rightarrow$ (i)$'$:\enspace
Let $F_j$ be as in Lemma \ref{lem:2.3}.
It follows from \eqref{eqn:2.3.2} that
\begin{align*}
     \SSK{X \otimes F}
   &\subset
     \bigcup_{j=1}^N \SSK{\reg{\overline{\mathcal O_j}}\otimes F_j \otimes F}. 
\intertext{Take 
 $\delta := \max\set{\|\nu\|}{\nu \text{ is a weight of } F_j \otimes F
\text{ for some } j}$. 
Then,}
   \bigcup_{j=1}^N \SSK{\reg{\overline{\mathcal O_j}}\otimes F_j \otimes F}&\subset
     \bigcup_{j=1}^N \delta \text{-neighborhood of }
        \SSK{\reg{\overline{\mathcal O_j}}}   
\\
   &\subset
     \bigcup_{j=1}^N
    \delta \text{-neighborhood of }
  \posspan{R}S_j.
\end{align*}
Since the condition (ii)$'$ implies 
that  the intersection of $\posspan{R} T$ with
 any $\delta$-neighborhood of  $\posspan{R}S_j$
 is relatively compact (Lemma \ref{lem:2.8}), 
we get 
$$
 \sharp \left( \SSK{X\otimes F} \cap \posspan{Z} T\right) < \infty.
$$
This shows the implication (ii)$' \Rightarrow$ (i)$'$.
Hence Theorem~\ref{thm:A} is proved.
\end{proof}

\subsection{Proof of Corollary \ref{cor:HCdeco}}
\label{subsec:cor}
\begin{proof}
[Proof of Corollary \ref{cor:HCdeco}]
The implication (i)$'$ $\Rightarrow$ (i) is obvious, 
 and the reverse implication (i) $\Rightarrow$ (i)$'$ follows from 
 the fact that any discrete summand in the restriction 
 $\pi|_{G'}$ for $\pi \in \operatorname{Disc}(G)$
 belongs to $\operatorname{Disc}(G')$, 
 see \cite[Cor.~8.7]{xkdisc}.  
Then the implication (i)$'$ $\Rightarrow$ (ii) follows from the fact 
 that for every $\mu \in \widehat{K'}$ 
 there are at most finitely many elements 
 in $\operatorname{Disc}(G')$
 having $\mu$ as a $K'$-type,
 whereas the implication (ii) $\Rightarrow$ (i) 
 is proved in \cite[Thm.~1.2]{xkdecomp}.  
Since the equivalence (ii) $\Leftrightarrow$ (iii) 
 holds by Theorem \ref{thm:A}, 
 Corollary \ref{cor:HCdeco} is proved.  
\end{proof}

\section{\gk-modules with finite weight multiplicities}
\label{sec:convex}

In this section,
 we relate weight multiplicities
 for \gk-modules
 with celebrated Kostant's convexity theorem
 \cite{xkostant}.  

\subsection{Simple Lie groups of (non)Hermitian type}
\label{subsec:GKZK}
Let $G$ be a real reductive linear Lie group,
 $K$ a maximal compact subgroup, 
 $Z_K$ the center of $K$, 
 and $T^s$ a maximal torus of the derived group 
 $K^s:=[K, K]$.  
Then $T:=T^s Z_K$ is a maximal torus of $K$.  
When $G$ is a simple Lie group,
 $Z_K$ is at most one-dimensional.  

A simple Lie group $G$
 (or its Lie algebra ${\mathfrak{g}}$)
 is called
 {\it{of Hermitian type}}, 
 if $Z_K$ is one-dimensional, 
 or equivalently, 
 if the associated Riemannian symmetric space $G/K$
 is a Hermitian symmetric space.  
It is the case
 when the Lie algebra ${\mathfrak{g}}$ is 
 ${\mathfrak{s u}}(p,q)$, ${\mathfrak{s o}}(2 n)$, 
 ${\mathfrak{s o}}^{\ast}(2n)$, ${\mathfrak{s p}}(n,{\mathbb{R}})$, 
 ${\mathfrak{e}}_{6(-14)}$, or ${\mathfrak{e}}_{7(-25)}$,  
 whereas ${\mathfrak{g}}$ $=$ ${\mathfrak{s l}}(n, {\mathbb{R}})$ $(n\ne 2)$, 
 ${\mathfrak{s o}}(p,q)$ $(p,q \ne 2)$, 
 ${\mathfrak{s u}}^{\ast}(2n)$, 
 ${\mathfrak{s p}}(p,q)$, ${\mathfrak{s l}}(n,{\mathbb{C}})$,  
 ${\mathfrak{s o}}(n,{\mathbb{C}})$, or ${\mathfrak{s p}}(n,{\mathbb{C}})$
 are not of Hermitian type.  

\subsection{Admissibility for the restriction to toral subgroups}
\label{subsec:multT}
In contrast to ${\mathfrak{g}}$-modules
 in the BGG category ${\mathcal{O}}$, 
 there are not many \gk-modules
 with finite weight multiplicities.  
We formulate this feature as follows.  

\begin{theorem}
\label{thm:Tmult}
Suppose that $X$ is \gk-module 
 of finite length.  
If $\dim_{\mathbb{C}} X=\infty$ then 
$
\dim_{\mathbb{C}} \operatorname{Hom}_{T^s}(\chi, X)=\infty
$
 for some $\chi \in \widehat{T^s}$. 
\end{theorem}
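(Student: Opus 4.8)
The plan is to deduce Theorem \ref{thm:Tmult} from Theorem \ref{thm:A} by taking the closed subgroup $K'$ to be the connected torus $T^s$. Since $T^s$ is abelian, $\widehat{T^s}$ consists of characters, so $T^s$-admissibility of $X$ means exactly that every weight space $\operatorname{Hom}_{T^s}(\chi, X)$ is finite-dimensional. Thus the conclusion of Theorem \ref{thm:Tmult} is precisely the failure of $T^s$-admissibility, and by Theorem \ref{thm:A} this failure is equivalent to $\Kasym{X} \cap C_K(T^s) \neq \{0\}$. Since $\Kasym{X} \subset \frak t_+^*$ always, the theorem follows once I establish the key geometric identity
\[
  C_K(T^s) = \frak t_+^*,
\]
because then $\Kasym{X} \cap C_K(T^s) = \Kasym{X}$, which is nonzero if and only if $\dim_{\mathbb{C}} X=\infty$ by the remark following Proposition \ref{prop:2.5}.

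To prove $C_K(T^s) = \frak t_+^*$, I use the momentum-set interpretation of Proposition \ref{prop:2.2}(1), namely $C_K(T^s) = \frak t_+^* \cap \sqrt{-1}\Ad^*(K)(\frak t^s)^\perp$, and identify $\frak k^*$ with $\frak k$ via the $K$-invariant inner product. Write $\frak k = \frak z_K \oplus \frak k^s$ (center plus derived algebra) and $\frak t = \frak z_K \oplus \frak t^s$. Given a dominant $\xi \in \frak t_+^*$ with corresponding $H = H_z + H_s \in \frak z_K \oplus \frak t^s$, I need to produce $k \in K$ with $\Ad(k)H \perp \frak t^s$. The central part $H_z$ is $\Ad(K)$-fixed and already orthogonal to $\frak t^s$, so it suffices to find $k \in K^s$ with the $\frak t^s$-projection of $\Ad(k)H_s$ equal to $0$.

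Here is where Kostant's convexity theorem \cite{xkostant} enters, which is the thematic and technical heart of the argument: the image of the compact adjoint orbit $\Ad(K^s)H_s$ under the orthogonal projection $\operatorname{pr}_{\frak t^s}$ is exactly the convex hull of the Weyl-group orbit $W(\frak k^s, \frak t^s)\cdot H_s$. Because $\frak k^s$ is semisimple, the barycenter $\frac{1}{|W|}\sum_{w} w H_s$ is a $W$-invariant element of $\frak t^s$, hence equals $0$; thus $0$ lies in that convex hull, and some $k \in K^s$ achieves $\operatorname{pr}_{\frak t^s}(\Ad(k)H_s)=0$. This gives $\Ad(k)H \in (\frak t^s)^\perp$, so every dominant $\xi$ lies in $\sqrt{-1}\Ad^*(K)(\frak t^s)^\perp$, proving $\frak t_+^* \subset C_K(T^s)$; the reverse inclusion is immediate from the definition \eqref{eqn:1.5.1}.

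I expect the main obstacle to be precisely the identity $C_K(T^s) = \frak t_+^*$, and within it the surjectivity onto $0$ of the chamber-valued projection along the semisimple directions. Everything else is formal: the reduction to $T^s$-admissibility, the application of Theorem \ref{thm:A}, and the equivalence $\Kasym{X} = \{0\} \Leftrightarrow \dim_{\mathbb{C}} X < \infty$ are immediate consequences of results already established. One should only take care that $T^s$ is connected, so that Proposition \ref{prop:2.2} applies, and that the use of $T^s$ rather than the full torus $T$ is essential: had we used $T$, the central directions $\frak z_K$ would survive the projection and $C_K(T)$ would shrink to the proper subcone $\frak t_+^* \cap \frak z_K^\perp$, so infinite-dimensionality would no longer force an infinite weight multiplicity.
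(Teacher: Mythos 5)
Your proposal is correct and follows essentially the same route as the paper: the paper likewise reduces everything to the identity $C_K(T^s)={\mathfrak{t}}_+^{\ast}$ (Proposition \ref{prop:191357}, proved via Kostant's convexity theorem applied to the semisimple part, with the center handled by $K=K^sZ_K$) and then concludes by Theorem \ref{thm:A} together with the equivalence $\Kasym{X}=\{0\}\Leftrightarrow\dim_{\mathbb{C}}X<\infty$. The only difference is cosmetic: you spell out the Weyl-group barycenter argument showing $0\in\operatorname{Conv}(W\cdot H_s)$, which the paper leaves implicit in its citation of Fact \ref{fact:Kos}.
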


We shall see that Theorem \ref{thm:Tmult} 
 is derived from Kostant's convexity theorem 
 (Fact \ref{fact:Kos})
 and from Theorem \ref{thm:A}.  
The following two corollaries 
 for simple Lie groups $G$ are immediate consequence
 of Theorem \ref{thm:Tmult}
 and its proof
(Section \ref{subsec:Kos}).  

\begin{corollary}
\label{cor:nonHerm}
Suppose that $G$ is not of Hermitian type.  
Then for any infinite-dimensional irreducible \gk-module $X$, 
 there exists $\chi \in \widehat T$ such that 
$\dim_{\mathbb{C}} \operatorname{Hom}_{T}(\chi, X)=\infty$.  
\end{corollary}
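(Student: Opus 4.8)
The plan is to deduce Corollary~\ref{cor:nonHerm} from Theorem~\ref{thm:Tmult} by comparing the two maximal tori $T^s$ and $T = T^s Z_K$. The essential structural input is that, since $G$ is simple but \emph{not} of Hermitian type, the center $Z_K$ of $K$ is $0$-dimensional, hence a finite group; consequently $T^s$ is a closed subgroup of $T$ of finite index, with $[T:T^s] = |Z_K/(Z_K \cap T^s)| < \infty$.

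First I would invoke Theorem~\ref{thm:Tmult}. An infinite-dimensional irreducible \gk-module $X$ is in particular a \gk-module of finite length with $\dim_{\mathbb C} X = \infty$, so the theorem yields a character $\chi \in \widehat{T^s}$ with $\dim_{\mathbb C} \operatorname{Hom}_{T^s}(\chi, X) = \infty$.

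Next I would transfer this infinite multiplicity from $T^s$ to $T$. Decomposing $X$ into its $T$-isotypic components $X = \bigoplus_{\tilde\chi \in \widehat T} X_{\tilde\chi}$ and restricting each summand to the subgroup $T^s \subset T$, the component $X_{\tilde\chi}$ becomes a multiple of $\tilde\chi|_{T^s}$; hence the $\chi$-isotypic part of $X|_{T^s}$ is exactly $\bigoplus_{\tilde\chi|_{T^s} = \chi} X_{\tilde\chi}$, and therefore
\[
  \dim_{\mathbb C}\operatorname{Hom}_{T^s}(\chi, X)
  = \sum_{\tilde\chi \in \widehat T,\ \tilde\chi|_{T^s} = \chi}
      \dim_{\mathbb C}\operatorname{Hom}_{T}(\tilde\chi, X).
\]
The characters $\tilde\chi \in \widehat T$ with $\tilde\chi|_{T^s} = \chi$ form a finite set, namely a coset of $\widehat{T/T^s}$ of cardinality at most $[T:T^s]$. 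Since the left-hand side is infinite while the sum on the right is finite, at least one summand must be infinite, and any such $\tilde\chi$ furnishes the desired character of $T$.

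I do not anticipate a serious obstacle: the only ingredient beyond Theorem~\ref{thm:Tmult} is the finiteness of $[T:T^s]$, which is precisely the content of the non-Hermitian hypothesis $\dim Z_K = 0$, together with the elementary bookkeeping of characters of a compact abelian group relative to a finite-index closed subgroup. The one point to state carefully is the isotypic identity above, so that the passage from $\widehat{T^s}$ to $\widehat{T}$ is seen to be a genuine equality of multiplicities summed over the finite fiber of the restriction map.
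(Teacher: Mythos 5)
Your proof is correct and follows the paper's route: both deduce the corollary from Theorem~\ref{thm:Tmult}, the only input beyond that theorem being that the non-Hermitian hypothesis forces $Z_K$ to be finite. The paper's own proof is a one-liner --- since $Z_K$ is $0$-dimensional, $K=K^s Z_K^0=K^s$ is semisimple, so $Z_K\subset T^s$ and $T=T^s Z_K=T^s$ exactly --- hence your finite-index transfer of multiplicities from $\widehat{T^s}$ to $\widehat{T}$ is valid but superfluous, the index being $1$.
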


\begin{corollary}
\label{cor:HermT}
Suppose that $G$ is of Hermitian type,
 and $X$ a \gk-module of finite length.  
Then $X$ is $T$-admissible
 if and only if $X$ is $Z_K$-admissible. 
\end{corollary}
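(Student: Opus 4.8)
The plan is to read both admissibility conditions off Theorem~\ref{thm:A} and to reduce the whole statement to a single identity between the associated cones. By Theorem~\ref{thm:A}, the module $X$ is \adm{T} if and only if $\Kasym{X}\cap C_K(T)=\{0\}$, and it is \adm{Z_K} if and only if $\Kasym{X}\cap C_K(Z_K)=\{0\}$. Consequently it is enough to prove
\[
  C_K(T)=C_K(Z_K),
\]
for then the two conditions literally coincide. One inclusion is formal: since $\mathfrak z_K\subset\mathfrak t$ we have $(\mathfrak t)^\perp\subset(\mathfrak z_K)^\perp$, hence $\Ad^*(K)(\mathfrak t)^\perp\subset\Ad^*(K)(\mathfrak z_K)^\perp$, and intersecting with $\mathfrak t_+^*$ gives $C_K(T)\subset C_K(Z_K)$. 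Through Theorem~\ref{thm:A} this inclusion is exactly the easy implication (\adm{Z_K} $\Rightarrow$ \adm{T}), and the real content is the reverse inclusion $C_K(Z_K)\subset C_K(T)$.

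Next I would compute $C_K(Z_K)$ by hand. Because $Z_K$ is central in $K$, the coadjoint action of $K$ fixes $\mathfrak z_K$, so the annihilator $(\mathfrak z_K)^\perp$ is $\Ad^*(K)$-stable and $\Ad^*(K)(\mathfrak z_K)^\perp=(\mathfrak z_K)^\perp$. Hence directly from \eqref{eqn:1.5.1},
\[
  C_K(Z_K)=\mathfrak t_+^*\cap\sqrt{-1}\,(\mathfrak z_K)^\perp=\{\xi\in\mathfrak t_+^*:\xi|_{\mathfrak z_K}=0\}.
\]
Under the splitting $\mathfrak t=\mathfrak t^s\oplus\mathfrak z_K$ this is precisely the dominant Weyl chamber $(\mathfrak t^s)_+^*$ of the semisimple part $K^s$, realized as the face of $\mathfrak t_+^*$ cut out by $\mathfrak z_K^\perp$; when $G$ is of Hermitian type $\dim\mathfrak z_K=1$, so this is a genuine codimension-one reduction.

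For the decisive inclusion $C_K(Z_K)\subset C_K(T)$ I would use the regular-function description from Proposition~\ref{prop:2.2}(2): if $\SSK{\reg{K_{\mathbb C}/T_{\mathbb C}}}=\posspan{Z}\{\lambda_1,\dots,\lambda_k\}$, then $C_K(T)=\posspan{R}\{\lambda_1,\dots,\lambda_k\}$, and the $\lambda_i$ are exactly the dominant weights $\lambda$ for which $V_\lambda$ has a nonzero $T_{\mathbb C}$-fixed, i.e.\ nonzero zero-weight, vector. Since the central torus $Z_K$ acts on the irreducible $V_\lambda$ through the single character $\lambda|_{\mathfrak z_K}$, the existence of a zero-weight vector forces $\lambda|_{\mathfrak z_K}=0$; thus every generator $\lambda_i$ already lies in $\mathfrak z_K^\perp$ and is a dominant weight of $K^s$ lying in its root lattice, and conversely every dominant weight of $K^s$ in the root lattice (lifting to $T$) arises this way up to a positive multiple, which is immaterial for the $\R_{\ge0}$-span. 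The inclusion $C_K(Z_K)\subset C_K(T)$ therefore amounts to the assertion that the dominant weights of $K^s$ possessing a zero weight span, over $\R_{\ge0}$, the full chamber $(\mathfrak t^s)_+^*$. This is elementary — the root lattice has finite index in the weight lattice, so a suitable positive multiple of each fundamental weight is a zero-weight dominant weight — and it is also exactly the content of the cone computation underlying Theorem~\ref{thm:Tmult}. Granting it, $C_K(T)=C_K(Z_K)$ and Theorem~\ref{thm:A} closes the argument.

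The main obstacle is this last spanning statement for $K^s$, together with the bookkeeping that identifies the generators of $C_K(T)$ with zero-weight, trivially-central dominant weights; everything else (the passage through Theorem~\ref{thm:A}, the monotonicity inclusion, and the centrality computation of $C_K(Z_K)$) is formal. Conceptually, the point is that $T$-admissibility and $Z_K$-admissibility are both governed by the \emph{same} wall $\mathfrak z_K^\perp\cap\mathfrak t_+^*$ of the chamber — enlarging $Z_K$ to the full torus $T$ adds no new directions to the obstruction cone — and the Hermitian hypothesis is what makes this a substantive reduction from $T$ to the one-dimensional center $Z_K$.
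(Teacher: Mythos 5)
Your proof is correct, and its skeleton coincides with the paper's: both arguments pass through Theorem~\ref{thm:A} and reduce the corollary to the single cone identity $C_K(T)=C_K(Z_K)$, with the centrality computation $C_K(Z_K)={\mathfrak{t}}_+^{\ast}\cap({\mathfrak{t}}^s)^{\ast}$ done the same way. Where you genuinely diverge is in the computation of $C_K(T)$. The paper obtains it from Proposition~\ref{prop:191357}, i.e.\ from Kostant's convexity theorem (Fact~\ref{fact:Kos}): the projection of $\operatorname{Ad}(K^s){({\mathfrak{t}}^s)}^{\perp}$ onto ${\mathfrak{t}}^s$ is all of ${\mathfrak{t}}^s$, whence $C_K(T)={\mathfrak{t}}_+^{\ast}\cap({\mathfrak{t}}^s)^{\ast}$ — a purely symplectic/classical computation, which is the organizing theme of Section~\ref{sec:convex}. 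You instead use the algebraic description $C_K(T)={\mathbb{R}}_{\ge 0}\text{-span}\operatorname{Supp}_K({\mathbb{C}}[K_{\mathbb{C}}/T_{\mathbb{C}}])$ from Proposition~\ref{prop:2.2}(2), identify that support with the dominant weights $\lambda$ whose $V_\lambda$ has a nonzero zero-weight vector (equivalently, dominant $\lambda$ in the root lattice, which forces $\lambda|_{{\mathfrak{z}}_{\mathfrak{k}}}=0$), and conclude by the finite-index argument that positive multiples of the fundamental weights of $K^s$ lie in that set, so the span is all of $({\mathfrak{t}}^s)_+^{\ast}$. This is a correct and entirely representation-theoretic substitute for the convexity input; it is in effect the ``quantized'' counterpart of the paper's momentum-map computation, and it has the small advantage of not invoking Fact~\ref{fact:Kos} at all (though it still leans on Sjamaar's theorem hidden inside Proposition~\ref{prop:2.2}(2)). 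Two harmless imprecisions: the $\lambda_i$ of Lemma~\ref{lem:2.1} are a finite set of monoid generators rather than the full support, but the two have the same ${\mathbb{R}}_{\ge 0}$-span; and, as you note, the Hermitian hypothesis is not actually used in the argument — it only makes the statement nontrivial by ensuring $\dim{\mathfrak{z}}_{\mathfrak{k}}=1$.
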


\begin{remark}
{\rm{
An irreducible \gk-module $X$ is called a highest weight module
 if $X$ is ${\mathfrak{b}}$-finite
 for some Borel subalgebra ${\mathfrak{b}}$
 of ${\mathfrak{g}}_{\mathbb{C}}={\mathfrak{g}} \otimes_{\mathbb{R}} {\mathbb{C}}$. 
There exist infinite-dimensional irreducible highest weight \gk-modules
 if and only if $G$ is of Hermitian type.  
In this case any such $X$ is $Z_K$-admissible
 (see \cite[Rem.~3.5 (3)]{xkdecoalg}), 
 hence $X$ is also $T$-admissible.  
}}
\end{remark}

Corollary \ref{cor:HermT} fits well into the Kirillov--Kostant--Duflo orbit
 philosophy
 (see \cite{xdv, xkirillov, xknasrin, xkostant70, para2015, para2019} for instance):

\begin{proposition}
\label{prop:KNproper}
Suppose $G$ is a simple Lie group of Hermitian type,
 and ${\mathcal{O}}$ a coadjoint orbit in ${\mathfrak{g}}^{\ast}$.  
Then the following two conditions are equivalent:
\itm{\rm{(i)}}
The momentum map ${\mathcal{O}} \to {\mathfrak{t}}^{\ast}$ is proper.  
\itm{\rm{(ii)}}
The momentum map ${\mathcal{O}} \to {\mathfrak{z}}_{\mathfrak{k}}^{\ast}$
 is proper.  
\end{proposition}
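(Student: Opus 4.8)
The plan is to identify both momentum maps with orthogonal projections on the coadjoint orbit and to reduce the equivalence to Kostant's linear convexity theorem (Fact~\ref{fact:Kos}) applied to the isometric $\Ad^*(K)$-action on $\mathcal{O}$. Fix an $\Ad(K)$-invariant inner product on $\mathfrak{g}$ and use it to identify $\mathfrak{z}_{\mathfrak{k}}^{*}\subset\mathfrak{t}^{*}\subset\mathfrak{k}^{*}\subset\mathfrak{g}^{*}$; write $p_{\mathfrak{t}}\colon\mathfrak{g}^{*}\to\mathfrak{t}^{*}$ and $p_{\mathfrak{z}}\colon\mathfrak{g}^{*}\to\mathfrak{z}_{\mathfrak{k}}^{*}$ for the orthogonal projections and $\Phi_{\mathfrak{t}},\Phi_{\mathfrak{z}}$ for the restrictions of these to $\mathcal{O}$. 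Since $\Phi_{\mathfrak{z}}=p_{\mathfrak{z}}\circ\Phi_{\mathfrak{t}}$ with $p_{\mathfrak{z}}$ linear, the implication (ii)~$\Rightarrow$~(i) is purely formal: for compact $C\subset\mathfrak{t}^{*}$ the set $\Phi_{\mathfrak{t}}^{-1}(C)$ is a closed subset of $\Phi_{\mathfrak{z}}^{-1}(p_{\mathfrak{z}}(C))$, which is compact whenever $\Phi_{\mathfrak{z}}$ is proper.

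The substance is the reverse implication (i)~$\Rightarrow$~(ii), which I would prove by contraposition. Assume $\Phi_{\mathfrak{z}}$ is not proper and choose $\xi_{n}\in\mathcal{O}$ with $\|\xi_{n}\|\to\infty$ while $\Phi_{\mathfrak{z}}(\xi_{n})=\xi_{n}|_{\mathfrak{z}_{\mathfrak{k}}}$ stays bounded. Because $\mathcal{O}$ is $\Ad^*(K)$-invariant and the action is by isometries, I may replace $\xi_{n}$ by $\Ad^*(k_{n})\xi_{n}$ for any $k_{n}\in K$ without changing $\|\xi_{n}\|$; moreover $\Phi_{\mathfrak{z}}$ is $\Ad^*(K)$-invariant (as $\mathfrak{z}_{\mathfrak{k}}$ is fixed pointwise by $\Ad(K)$), so $\Phi_{\mathfrak{z}}(\Ad^*(k_{n})\xi_{n})=\Phi_{\mathfrak{z}}(\xi_{n})$ remains bounded. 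The goal is to select $k_{n}$ so that the \emph{full} $\mathfrak{t}$-projection $\Phi_{\mathfrak{t}}(\Ad^*(k_{n})\xi_{n})$ also stays bounded; this produces a sequence escaping to infinity on which $\Phi_{\mathfrak{t}}$ is bounded, showing $\Phi_{\mathfrak{t}}$ is not proper.

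To carry this out, let $\lambda_{n}\in\mathfrak{t}_{+}^{*}$ be the dominant representative of the $\Ad^*(K)$-orbit of $\xi_{n}|_{\mathfrak{k}}$. Kostant's convexity theorem gives $\{\Phi_{\mathfrak{t}}(\Ad^*(k)\xi_{n}):k\in K\}=p_{\mathfrak{t}}\bigl(\Ad^*(K)(\xi_{n}|_{\mathfrak{k}})\bigr)=\operatorname{conv}(W\lambda_{n})$. Decompose $\lambda_{n}=\lambda_{n}^{s}+\zeta_{n}$ along the orthogonal splitting $\mathfrak{t}^{*}=(\mathfrak{t}^{s})^{*}\oplus\mathfrak{z}_{\mathfrak{k}}^{*}$. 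The Weyl group $W=W(\mathfrak{k},\mathfrak{t})$ fixes $\mathfrak{z}_{\mathfrak{k}}^{*}$ pointwise and, since $\mathfrak{k}^{s}$ is semisimple, acts on $(\mathfrak{t}^{s})^{*}$ with no nonzero fixed vector; hence the $W$-average of $\lambda_{n}^{s}$ vanishes and $0\in\operatorname{conv}(W\lambda_{n}^{s})\subset(\mathfrak{t}^{s})^{*}$. Therefore $\operatorname{conv}(W\lambda_{n})=\operatorname{conv}(W\lambda_{n}^{s})+\zeta_{n}$, every element of which has $\mathfrak{z}_{\mathfrak{k}}^{*}$-component equal to $\zeta_{n}$, so $\zeta_{n}$ is its point of least norm. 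As $\operatorname{conv}(W\lambda_{n})$ is compact the infimum is attained: there is $k_{n}\in K$ with $\Phi_{\mathfrak{t}}(\Ad^*(k_{n})\xi_{n})=\zeta_{n}=\Phi_{\mathfrak{z}}(\xi_{n})$, which is bounded. This furnishes the desired sequence and completes the contrapositive.

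The main obstacle is the identity $\inf_{k\in K}\|\Phi_{\mathfrak{t}}(\Ad^*(k)\xi)\|=\|\Phi_{\mathfrak{z}}(\xi)\|$, which is the only place the structure of $K$ enters and which rests on two ingredients: Kostant's theorem, describing the $\mathfrak{t}$-projection of a $K$-orbit as the weight polytope $\operatorname{conv}(W\lambda)$, and the fact that $\mathfrak{z}_{\mathfrak{k}}^{*}$ is exactly the $W$-fixed subspace of $\mathfrak{t}^{*}$, so that the semisimple part of the projection can be rotated away to $0$ while the central part is rigid. The step I expect to require the most care is verifying that $\zeta_{n}$ is genuinely the minimum-norm point and is attained inside $\operatorname{conv}(W\lambda_{n})$; this follows cleanly from the orthogonal decomposition together with $0\in\operatorname{conv}(W\lambda_{n}^{s})$, but it is where the Hermitian (equivalently, $1$-dimensional $\mathfrak{z}_{\mathfrak{k}}$) hypothesis is most transparently used. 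Finally, I would note that this statement is precisely the orbit-method shadow of Corollary~\ref{cor:HermT}.
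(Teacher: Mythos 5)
The paper states Proposition~\ref{prop:KNproper} without proof, offering it only as the orbit-method counterpart of Corollary~\ref{cor:HermT}, so there is no official argument to measure you against. Your proof is correct in substance and is precisely the classical analogue of the paper's route to Corollary~\ref{cor:HermT}: there the key point is the identity $C_K(T)=\mathfrak{t}_+^{\ast}\cap(\mathfrak{t}^s)^{\ast}=C_K(Z_K)$, extracted from Kostant's convexity theorem via Proposition~\ref{prop:191357}, and your identity $\inf_{k\in K}\|\Phi_{\mathfrak{t}}(\Ad^*(k)\xi)\|=\|\Phi_{\mathfrak{z}}(\xi)\|$ is the same fact read directly on coadjoint orbits instead of on asymptotic $K$-supports. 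The convexity step is sound: $W_K$ fixes $\mathfrak{z}_{\mathfrak{k}}^{\ast}$ pointwise, has no nonzero fixed vector in $(\mathfrak{t}^s)^{\ast}$, hence $0\in\operatorname{Conv}(W_K\lambda_n^{s})$ and $\zeta_n=\Phi_{\mathfrak{z}}(\xi_n)\in\operatorname{Conv}(W_K\lambda_n)$, which is all you need.

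Two small repairs. First, your negation of properness is incomplete: a coadjoint orbit of a noncompact $G$ need not be closed in $\mathfrak{g}^{\ast}$, so $\Phi_{\mathfrak{z}}^{-1}(C)$ can fail to be compact not only through an unbounded sequence but also through a bounded sequence $\xi_n\in\mathcal{O}$ converging to a point of $\overline{\mathcal{O}}\setminus\mathcal{O}$. That case is harmless---such a sequence has bounded $\Phi_{\mathfrak{t}}$-image too, so $\Phi_{\mathfrak{t}}$ is then also non-proper---but it should be dispatched explicitly. Second, your closing remark misplaces the role of the Hermitian hypothesis: nothing in your argument uses $\dim\mathfrak{z}_{\mathfrak{k}}=1$, and the equivalence as you prove it holds verbatim for any reductive $G$ with the splitting $\mathfrak{t}=\mathfrak{t}^s\oplus\mathfrak{z}_{\mathfrak{k}}$; the hypothesis only keeps the statement from being vacuous (for non-Hermitian simple $G$ it degenerates to the classical shadow of Corollary~\ref{cor:nonHerm}).
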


\subsection{An application of Kostant's convexity theorem}
\label{subsec:Kos}
Suppose $K$ is a connected compact Lie group,
 and $T$ is a maximal torus of $K$.  
Let $W_K$ be the Weyl group 
 for the root system $\Delta({\mathfrak{k}}_{\mathbb{C}}, {\mathfrak{t}}_{\mathbb{C}})$.  
By a $K$-invariant inner product $\langle\, , \rangle$ on ${\mathfrak{k}}$, 
 we identify ${\mathfrak{t}}^{\perp}$ $(\subset {\mathfrak{k}}^{\ast})$ 
 with the orthogonal complementary subspace of ${\mathfrak{k}}$, 
 and write
$
  \pr {\mathfrak{k}}{\mathfrak{t}} \colon {\mathfrak{k}} \to {\mathfrak{t}}
$
 for the projection 
 with respect to the direct sum decomposition 
 ${\mathfrak{k}}={\mathfrak{t}}\oplus {\mathfrak{t}}^{\perp}$.

For a finite subset $S=\{s_1, \cdots, s_k\}$ of ${\mathfrak{t}}$, 
 the convex hull of $S$ is the smallest convex set containing $S$, 
 which is expressed as:
\[
\operatorname{Conv}(S)
:=
\left\{\sum_{i=1}^k a_i s_i: 
a_1, \cdots, a_k \ge 0,
\quad
 a_1+ \cdots +a_k =1
\right\}.  
\]

We recall Kostant's convexity theorem:
\begin{fact}
[{\cite[Thm.~8.2]{xkostant}}]
\label{fact:Kos}
For any $Y \in {\mathfrak{t}}$, 
 we have 
$
\pr{\mathfrak{k}}{\mathfrak{t}}(\Ad(K)Y)
=
\operatorname{Conv}(W_K Y).  
$
\end{fact}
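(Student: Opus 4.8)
The plan is to realize the projection $\pr{\mathfrak{k}}{\mathfrak{t}}$, restricted to the adjoint orbit through $Y$, as the momentum map of a Hamiltonian torus action, and then to invoke the Atiyah--Guillemin--Sternberg convexity theorem, which identifies the image of such a momentum map with the convex hull of the images of its fixed points. First I would use the fixed $K$-invariant inner product on $\mathfrak{k}$ (the one defining $\pr{\mathfrak{k}}{\mathfrak{t}}$) to identify $\mathfrak{k}\cong\mathfrak{k}^{\ast}$ and $\mathfrak{t}\cong\mathfrak{t}^{\ast}$, under which the restriction map $\mathfrak{k}^{\ast}\to\mathfrak{t}^{\ast}$ becomes exactly the orthogonal projection $\pr{\mathfrak{k}}{\mathfrak{t}}$. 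Set $\mathcal{O}:=\Ad(K)Y\subset\mathfrak{k}$; under the identification this is a coadjoint orbit, hence a compact connected symplectic manifold for the Kirillov--Kostant--Souriau form, and a Hamiltonian $K$-space whose $K$-momentum map is the inclusion $\mathcal{O}\hookrightarrow\mathfrak{k}^{\ast}$. Restricting the action to the maximal torus $T\subset K$, the $T$-momentum map is the composite of this inclusion with $\mathfrak{k}^{\ast}\to\mathfrak{t}^{\ast}$, i.e.\ precisely $\pr{\mathfrak{k}}{\mathfrak{t}}|_{\mathcal{O}}$; thus the left-hand side $\pr{\mathfrak{k}}{\mathfrak{t}}(\Ad(K)Y)$ is the image of a torus momentum map.

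Next I would identify the $T$-fixed points of $\mathcal{O}$. A point $\Ad(k)Y$ is fixed by $T$ iff $[\mathfrak{t},\Ad(k)Y]=0$, i.e.\ iff $\Ad(k)Y$ lies in the centralizer $\mathfrak{z}_{\mathfrak{k}}(\mathfrak{t})=\mathfrak{t}$ (maximality of the torus). Hence the fixed-point set is $\mathcal{O}\cap\mathfrak{t}$, which is the single Weyl-group orbit $W_K\cdot Y$, since two elements of $\mathfrak{t}$ are $\Ad(K)$-conjugate iff they are $W_K$-conjugate; this holds even when $Y$ is singular, the fixed set remaining the finite set $W_K Y$. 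On these points $\pr{\mathfrak{k}}{\mathfrak{t}}$ is the identity, so the images of the fixed points are exactly $W_K Y$. Applying the Atiyah--Guillemin--Sternberg theorem to $\mathcal{O}$ with its Hamiltonian $T$-action, the momentum image equals the convex hull of the images of the fixed-point components, namely $\operatorname{Conv}(W_K Y)$, which is the asserted equality.

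The one containment that is genuinely elementary is $\pr{\mathfrak{k}}{\mathfrak{t}}(\Ad(K)Y)\subseteq\operatorname{Conv}(W_K Y)$: for each $\xi\in\mathfrak{t}$ one maximizes $\langle\xi,\Ad(k)Y\rangle$ over $k\in K$, and a first-order analysis at a critical point forces $\Ad(k)Y\in\mathfrak{t}$, so the maximum equals $\max_{w\in W_K}\langle\xi,wY\rangle$, the support function of $\operatorname{Conv}(W_K Y)$; comparing support functions yields the inclusion. The substance of the theorem, and the step I expect to be the real obstacle, is the reverse inclusion, which is equivalent to the \emph{convexity} of the image $\pr{\mathfrak{k}}{\mathfrak{t}}(\Ad(K)Y)$: once convexity is known, the image contains $W_K Y$ and therefore its convex hull. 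Convexity is exactly what the Atiyah--Guillemin--Sternberg machinery supplies, through the local normal form for momentum maps together with the connectedness of momentum-map fibers; a more self-contained route following Kostant establishes convexity directly via a careful analysis of the critical-point structure of the functions $\langle\xi,\Ad(\cdot)Y\rangle$, but it is there that all the genuine work resides.
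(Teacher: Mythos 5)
Your argument is correct, but note that the paper does not prove this statement at all: it is stated as a \emph{Fact} with a citation to Kostant's original 1973 paper, so there is no internal proof to compare against. Your route --- realize $\Ad(K)Y$ as a coadjoint orbit via the invariant inner product, observe that the $T$-momentum map is $\pr{\mathfrak{k}}{\mathfrak{t}}$ restricted to the orbit, identify the $T$-fixed set as $\mathcal{O}\cap\mathfrak{t}=W_K Y$ (using $\mathfrak{z}_{\mathfrak{k}}(\mathfrak{t})=\mathfrak{t}$ and the fact that $K$-conjugacy in $\mathfrak{t}$ is $W_K$-conjugacy), and invoke Atiyah--Guillemin--Sternberg --- is the standard modern symplectic proof and is sound; it is in fact the derivation that fits most naturally with the momentum-map viewpoint the paper adopts in Section 2.3 and Proposition 3.9. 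It is worth being aware that this is historically inverted: Kostant's Theorem 8.2 predates Atiyah and Guillemin--Sternberg by a decade, and his proof proceeds by the direct critical-point analysis you sketch at the end (it is in many ways a precursor of the general convexity theorem), so if one wants a logically independent proof of the Fact one should follow that elementary route rather than cite AGS. One small point in your ``elementary inclusion'': the first-order condition at a critical point of $\langle\xi,\Ad(\cdot)Y\rangle$ only gives $[\xi,\Ad(k)Y]=0$, which forces $\Ad(k)Y\in\mathfrak{t}$ only when $\xi$ is regular; you then need to pass to general $\xi$ by density and continuity of support functions, which is routine but should be said.
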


Fact \ref{fact:Kos} determines the momentum set $\Delta(T^{\ast}(K/T))$
 of the cotangent bundle of the flag manifold $K/T$
 as follows:

\begin{proposition}
\label{prop:191357}
Suppose that $K$ is a connected semisimple compact Lie group.  
Then 
\[
   \Delta(T^{\ast}(K/T))=C_K(T)={\mathfrak{t}}_+^{\ast}.  
\]
\end{proposition}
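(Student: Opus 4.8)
The plan is to read off the first equality from Proposition~\ref{prop:2.2} and to derive the second from Kostant's convexity theorem (Fact~\ref{fact:Kos}). Since $T$ is a connected closed subgroup of $K$, Proposition~\ref{prop:2.2}(1) applied to $K'=T$ gives $\Delta(T^{\ast}(K/T))=C_K(T)$, so it remains to show $C_K(T)=\mathfrak{t}_+^{\ast}$. By the definition \eqref{eqn:1.5.1} we have $C_K(T)=\mathfrak{t}_+^{\ast}\cap\sqrt{-1}\Ad^{\ast}(K)(\mathfrak{t})^{\perp}\subseteq\mathfrak{t}_+^{\ast}$, hence only the reverse inclusion $\mathfrak{t}_+^{\ast}\subseteq\sqrt{-1}\Ad^{\ast}(K)(\mathfrak{t})^{\perp}$ requires proof.

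Using the $K$-invariant inner product I would identify $\sqrt{-1}\mathfrak{k}^{\ast}$ with $\mathfrak{k}$, so that $\sqrt{-1}\mathfrak{t}^{\ast}$ corresponds to $\mathfrak{t}$, the subspace $\sqrt{-1}(\mathfrak{t})^{\perp}$ corresponds to the orthogonal complement $\mathfrak{t}^{\perp}$ of $\mathfrak{t}$ in $\mathfrak{k}$, and $\Ad^{\ast}(K)$ becomes the adjoint action $\Ad(K)$. Under this identification a dominant element of $\mathfrak{t}_+^{\ast}$ corresponds to some $H\in\mathfrak{t}$, and the desired inclusion amounts to the claim that $H\in\Ad(K)\mathfrak{t}^{\perp}$. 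Now $\mathfrak{t}^{\perp}$ is exactly the kernel of the orthogonal projection $\pr{\mathfrak{k}}{\mathfrak{t}}$, so $H\in\Ad(K)\mathfrak{t}^{\perp}$ holds precisely when there is $k\in K$ with $\pr{\mathfrak{k}}{\mathfrak{t}}(\Ad(k)H)=0$, that is, when $0\in\pr{\mathfrak{k}}{\mathfrak{t}}(\Ad(K)H)$.

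The crux is to evaluate this projected orbit, and here Fact~\ref{fact:Kos} applies directly: $\pr{\mathfrak{k}}{\mathfrak{t}}(\Ad(K)H)=\operatorname{Conv}(W_K H)$. Thus the claim reduces to $0\in\operatorname{Conv}(W_K H)$, and this is where semisimplicity of $K$ is used decisively. Since $\mathfrak{k}$ is semisimple, the roots span $\mathfrak{t}^{\ast}$, and therefore $\mathfrak{t}$ carries no nonzero $W_K$-fixed vector; consequently the barycenter $\frac{1}{\sharp W_K}\sum_{w\in W_K}wH$, being $W_K$-invariant, must vanish, so $0=\frac{1}{\sharp W_K}\sum_{w\in W_K}wH\in\operatorname{Conv}(W_K H)$ for every $H\in\mathfrak{t}$. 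Hence every $H\in\mathfrak{t}$ lies in $\Ad(K)\mathfrak{t}^{\perp}$, which yields $\mathfrak{t}_+^{\ast}\subseteq\sqrt{-1}\Ad^{\ast}(K)(\mathfrak{t})^{\perp}$ and completes the argument.

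The only point that I expect to require genuine care is matching the several identifications ($\sqrt{-1}\mathfrak{k}^{\ast}\cong\mathfrak{k}$ and $\sqrt{-1}(\mathfrak{t})^{\perp}\leftrightarrow\ker\pr{\mathfrak{k}}{\mathfrak{t}}$) so that the hypothesis of Fact~\ref{fact:Kos} is met verbatim and the roles of $\Ad^{\ast}(K)$ and $\Ad(K)$ are correctly interchanged. The semisimplicity input is precisely what forces $0$ to be a Weyl barycenter, and its failure for groups with $Z_K\neq\{0\}$ is consistent with the contrasting behavior recorded in Corollaries~\ref{cor:nonHerm} and~\ref{cor:HermT}.
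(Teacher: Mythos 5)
Your argument is correct and follows essentially the same route as the paper: the first equality is read off from Proposition~\ref{prop:2.2} (i.e.\ \eqref{eqn:2.2.3}), and the inclusion ${\mathfrak{t}}_+^{\ast}\subseteq C_K(T)$ is obtained by applying Fact~\ref{fact:Kos} to conclude that every $H\in{\mathfrak{t}}$ lies in $\Ad(K){\mathfrak{t}}^{\perp}$. Your explicit justification that $0\in\operatorname{Conv}(W_K H)$ via the vanishing of the $W_K$-barycenter is exactly the point where semisimplicity enters, a detail the paper leaves implicit.
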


\begin{proof}
Fix a nonzero element $Y \in {\mathfrak{t}}$.  
Then Kostant's convexity theorem shows
 that $\pr {\mathfrak{k}}{\mathfrak{t}} (\Ad(K)Y)$ contains 
 the origin $0$.  
In particular, 
 there exists $k \in K$
 such that 
$
  Y':= \Ad(k) Y \in {\mathfrak{t}}^{\perp}
$.  
This means that 
$
  Y \in \Ad(K){\mathfrak{t}}^{\perp}
$, 
hence 
$
 \pr {\mathfrak{k}}{\mathfrak{t}} 
 (\Ad(K) {\mathfrak{t}}^{\perp})= {\mathfrak{t}}.  
$
By \eqref{eqn:2.2.3}, 
 we get Proposition \ref{prop:191357}.  
\end{proof}

\begin{proof}
[Proof of Theorem \ref{thm:Tmult}]
Applying Proposition \ref{prop:191357} to $K^s/T^s$, 
 we obtain 
\[
C_K(T^s)={\mathfrak{t}}_+^{\ast}
\]
 because $K=K^s Z_K$.  
In turn, 
 Theorem \ref{thm:A} tells that $X$ is $T^s$-admissible
 if and only if $\Kasym{X}=\{0\}$, 
 or equivalently,
 $\dim X< \infty$.   
\end{proof}

\begin{proof}
[Proof of Corollary \ref{cor:nonHerm}]
Immediate from Theorem \ref{thm:Tmult}
 because $T=T^s$.  
\end{proof}

\begin{proof}
[Proof of Corollary \ref{cor:HermT}]
We regard $({\mathfrak{t}}^s)^{\ast}$
 as a subspace of ${\mathfrak{t}}^{\ast}$
 via the direct sum decomposition 
 ${\mathfrak{t}}= {\mathfrak{t}}^s \oplus {\mathfrak{z}}_{\mathfrak{k}}$.  
By Proposition \ref{prop:191357}, 
 we have
$C_K(T)={\mathfrak{t}}_+^{\ast} \cap ({\mathfrak{t}}^s)^{\ast}
 =C_K(Z_K)$, 
 whence Corollary \ref{cor:HermT}.  
\end{proof}

\section{Admissible restriction of degenerate principal series representations}
\label{sec:4}
In the orbit philosophy due to Kirillov--Kostant, 
 the Zuckerman derived functor modules $A_{\mathfrak{q}}(\lambda)$
 are supposed to be attached 
 to elliptic coadjoint orbits,
 whereas parabolically induced representations
 $\operatorname{Ind}_Q^G({\mathbb{C}}_{\lambda})$ 
are to hyperbolic coadjoint orbits.  
Classification theory of admissible restrictions
 has been developed mainly 
 for $A_{\mathfrak{q}}(\lambda)$, 
 see \cite{xdgv, xkdecomp, xkdecoass, deco-euro, decoAq, xkyoshi15}
 for example.  
In this section 
 we apply Theorem \ref{thm:A}
 to induced representations from 
 a parabolic subgroup $Q$ of $G$
 and to their subquotient modules ({\it{$Q$-series}}).

\subsection{Irreducible representations in the $Q$-series}
\label{subsec:defQ}
Suppose that $Q$ is a parabolic subgroup 
 of a reductive Lie group $G$.  
\begin{definition}
\label{def:Qseries}
An irreducible admissible representation $\pi$ of $G$
 is said to be in the {\it{$Q$-series}}
 if $\pi$ occurs as a subquotient 
 of the induced representation 
 $\operatorname{Ind}_Q^G \tau$ from a finite-dimensional representation $\tau$ of $Q$.  
\end{definition}

\begin{example}
When $Q=G$, 
 $\pi$ is in the $Q$-series
 if and only if $\dim_{\mathbb{C}} \pi < \infty$.  
\end{example}

\begin{example}
\label{ex:QGP}
When $Q$ is a minimal parabolic subgroup $P$, 
 any irreducible admissible representation  of $G$
 belongs to the $Q$-series 
 by Harish-Chandra's subquotient theorem.  
\end{example}

The next example is a generalization of Example \ref{ex:QGP}.  

\begin{example}
Let $G/H$ be a reductive symmetric space,
 that is, 
 $H$ is an open subgroup of $G^{\sigma}=\{g \in G: \sigma g=g\}$
 for some involutive automorphism $\sigma$
 of a real reductive Lie group $G$.  
Take a Cartan involution $\theta$ of $G$
 commuting with $\sigma$,  
 and a maximal abelian subspace ${\mathfrak {a}}$
 in ${\mathfrak{g}}^{-\sigma,-\theta}=
\{X \in {\mathfrak{g}}:\sigma X=\theta X=-X\}$.  
Let $Q$ be a parabolic subgroup of $G$
 defined by a generic element $X \in {\mathfrak{a}}$, 
 that is, 
 $Q$ is the normalizer of the real parabolic subalgebra:
\[
{\mathfrak{q}}=\text{the sum of the eigenspaces of $\operatorname{ad}(X)$
 with nonnegative eigenvalues}.  
\]
Such $Q$ is uniquely determined up to conjugation
 by an element of $G$.  
We say that $Q$ is a {\it{minimal parabolic subgroup for $G/H$}}.  
\end{example}

\begin{remark}
{\rm{
Let $G/H$ be a reductive symmetric space, 
 and $Q$ a minimal parabolic subgroup for $G/H$.  
Then any irreducible representation 
 that can be realized as a subquotient
 in the regular representation on $C^{\infty}(G/H)$
 belongs to the $Q$-series.  
}}
\end{remark}

\subsection{Restriction of representations in the $Q$-series}
\label{subsec:Qseries}
We give a necessary and sufficient condition 
 for all irreducible representations
 in the $Q$-series 
 to be $K'$-admissible 
 where $K'$ is a (not necessarily, maximal) compact subgroup.  

\begin{theorem}
\label{thm:0.2}
Let $G$ be a real reductive linear Lie group, 
 $K$ a maximal compact subgroup, 
 $K'$ a closed subgroup of $K$, 
 and $Q$ a parabolic subgroup of $G$.  
Then the following two conditions are equivalent:
\itm{\rm{(i)}}
for any irreducible representation $\pi$ of $G$ in the $Q$-series, 
 $\pi|_{K'}$ is $K'$-admissible;
\itm{\rm{(ii)}}
$C_K(Q \cap K) \cap C_K(K') = \{0\}$.  
\end{theorem}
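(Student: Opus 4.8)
The plan is to reduce the equivalence (i)$\Leftrightarrow$(ii) to Theorem~\ref{thm:A} by pinning down the asymptotic $K$-support of every representation in the $Q$-series. The decisive point is the identity
\[
  \Kasym{\operatorname{Ind}_Q^G \tau} = C_K(Q\cap K)
\]
for every nonzero finite-dimensional representation $\tau$ of $Q$. Granting this, both implications are essentially formal. Any $\pi$ in the $Q$-series is, as a $K$-module, a subquotient of some $\operatorname{Ind}_Q^G\tau$, so $\pi \le_K \operatorname{Ind}_Q^G\tau$ and hence $\Kasym{\pi}\subset C_K(Q\cap K)$ by Lemma~\ref{lem:2.4}(1). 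Conversely, the full degenerate principal series realizes the whole cone $C_K(Q\cap K)$, which will let me manufacture a non-admissible witness.

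To establish the identity I would pass to the compact picture. Since $G=KQ$ and $K\cap Q$ is a maximal compact subgroup of a Levi factor of $Q$, restriction to $K$ gives a $K$-isomorphism $\operatorname{Ind}_Q^G\tau|_K \cong \operatorname{Ind}_{K\cap Q}^K(\tau|_{K\cap Q})$, so it suffices to compute $\Kasym{\operatorname{Ind}_{K\cap Q}^K W}$ for a nonzero finite-dimensional $(K\cap Q)$-module $W$. For the upper bound I embed $W$ into $\widetilde F|_{K\cap Q}$ for some finite-dimensional $K$-module $\widetilde F$ and invoke the tensor identity $\operatorname{Ind}_{K\cap Q}^K(\widetilde F|_{K\cap Q})\cong C^\infty(K/(K\cap Q))\otimes \widetilde F$ together with Lemma~\ref{lem:2.4}(1),(2), which yields $\Kasym{\operatorname{Ind}_{K\cap Q}^K W}\subset \Kasym{C^\infty(K/(K\cap Q))}=C_K(Q\cap K)$, the last equality being Proposition~\ref{prop:2.2}(2). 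For the matching lower bound I choose a finite-dimensional $K$-module $F'$ with $F'|_{K\cap Q}\supset W^*$; then the tensor identity gives $\operatorname{Ind}_{K\cap Q}^K W\otimes F'\ge_K \operatorname{Ind}_{K\cap Q}^K {\bf{1}}=C^\infty(K/(K\cap Q))$, and Lemma~\ref{lem:2.4}(1),(2) produces the reverse inclusion.

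With the identity in hand, (ii)$\Rightarrow$(i) is immediate: if $C_K(Q\cap K)\cap C_K(K')=\{0\}$ then $\Kasym{\pi}\cap C_K(K')=\{0\}$ for every $\pi$ in the $Q$-series, so Theorem~\ref{thm:A} makes each $\pi|_{K'}$ admissible. For (i)$\Rightarrow$(ii) I argue by contraposition, taking $I:=\operatorname{Ind}_Q^G{\bf{1}}$, a \gk-module of finite length with $\Kasym{I}=C_K(Q\cap K)$. Its irreducible composition factors $\pi_1,\dots,\pi_r$ all lie in the $Q$-series, and since $I|_K\cong\bigoplus_i m_i\,\pi_i|_K$ as $K$-modules, Lemma~\ref{lem:2.4}(3) gives $C_K(Q\cap K)=\bigcup_i \Kasym{\pi_i}$. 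Hence if $C_K(Q\cap K)\cap C_K(K')\neq\{0\}$, then $\Kasym{\pi_{i_0}}\cap C_K(K')\neq\{0\}$ for some $i_0$, so by Theorem~\ref{thm:A} the $Q$-series representation $\pi_{i_0}$ is not $K'$-admissible, contradicting (i).

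The hard part will be the equality $\Kasym{C^\infty(K/(K\cap Q))}=C_K(Q\cap K)$, i.e.\ Proposition~\ref{prop:2.2}(2) for $K'=K\cap Q$, which in general is \emph{disconnected} (for a minimal parabolic, $K\cap Q=M$). I expect to dispose of this in the spirit of Remark~(3) after Theorem~\ref{thm:A}: the cone defined by \eqref{eqn:1.5.1} depends only on $\frak k\cap\frak q$, so $C_K(K\cap Q)=C_K((K\cap Q)_0)$ and, via \eqref{eqn:2.2.3}, coincides with the momentum set $\Delta(T^{\ast}(K/(K\cap Q)))$ irrespective of connectivity; Sjamaar's identification of this momentum set with the limit cone underlying Proposition~\ref{prop:2.2} applies verbatim to the possibly disconnected compact isotropy subgroup. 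The remaining ingredients—the compact-picture isomorphism and the two tensor identities—are routine Frobenius-reciprocity manipulations.
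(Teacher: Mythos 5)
Your proposal is correct and follows essentially the same route as the paper: both reduce the statement to Theorem \ref{thm:A} via the identity $\Kasym{\operatorname{Ind}_Q^G\tau}=C_K(Q\cap K)$ computed in the compact picture, with the equivalence of (i) and the admissibility of the full induced modules handled through finite length. Your extra care with the tensor-identity sandwich and with the disconnectedness of $K\cap Q$ merely fills in details the paper leaves implicit in its citation of Proposition \ref{prop:2.2} and Lemma \ref{lem:2.4}.
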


\begin{proof}
Since the induced representation $\operatorname{Ind}_Q^G(\tau)$
 is of finite length as a $G$-module, 
 the condition (i) is equivalent to the following condition:
\newline
(i)$'$\enspace 
 $\operatorname{Ind}_Q^G(\tau)$ is $K'$-admissible
 for any finite-dimensional representation $\tau$ of $Q$.  
\newline
By Proposition \ref{prop:2.2} and Lemma \ref{lem:2.4}, 
 the asymptotic $K$-support of $\operatorname{Ind}_Q^G(\tau)$ is given by
\begin{equation}
\label{eqn:ASQ}
 \operatorname{A S}_K(\operatorname{Ind}_Q^G(\tau))
 =
  \operatorname{A S}_K(\operatorname{Ind}_{Q \cap K}^K(\tau|_{Q \cap K}))
 =
  \operatorname{A S}_K(\operatorname{Ind}_{Q \cap K}^K({\bf{1}}))
 =C_K(Q \cap K).  
\end{equation}
Hence Theorem \ref{thm:0.2} is derived from Theorem \ref{thm:A}. 
\end{proof}

Let $P=M A N$ be a minimal parabolic subgroup of $G$.  
Applying Theorem \ref{thm:0.2} to the case $Q=P$, 
 we obtain from Example \ref{ex:QGP} the following:
\begin{corollary}
\label{cor:4.6}
Let $K'$ be a closed subgroup of $K$.  
Then the following two conditions are equivalent:
\itm{\rm{(i)}}
any irreducible admissible representation of $G$ 
 is $K'$-admissible;
\itm{\rm{(ii)}}
$C_K(M) \cap C_K(K')=\{0\}$.  
\end{corollary}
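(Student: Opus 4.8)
The plan is to derive Corollary \ref{cor:4.6} as a direct specialization of Theorem \ref{thm:0.2}. The corollary concerns the $Q$-series for the choice $Q=P$, the minimal parabolic subgroup, so the entire task reduces to identifying the two hypotheses of the two statements. First I would invoke Example \ref{ex:QGP}: by Harish-Chandra's subquotient theorem, every irreducible admissible representation of $G$ belongs to the $P$-series. This means that ``every irreducible admissible representation of $G$ is $K'$-admissible'' is exactly the same assertion as ``every irreducible representation of $G$ in the $P$-series is $K'$-admissible,'' which is condition (i) of Theorem \ref{thm:0.2} with $Q=P$. So the two versions of condition (i) coincide verbatim.

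Second, I would match the two versions of condition (ii). Theorem \ref{thm:0.2} (ii) reads $C_K(Q\cap K)\cap C_K(K')=\{0\}$, and upon setting $Q=P$ this becomes $C_K(P\cap K)\cap C_K(K')=\{0\}$. Since $P=MAN$ is a minimal parabolic subgroup, the intersection $P\cap K$ equals $M$ (the centralizer of $A$ in $K$; the factors $A$ and $N$ meet $K$ only in the identity). Substituting $P\cap K=M$ yields exactly $C_K(M)\cap C_K(K')=\{0\}$, which is condition (ii) of the corollary. With both conditions identified, the equivalence (i) $\Leftrightarrow$ (ii) of Corollary \ref{cor:4.6} follows immediately from the equivalence (i) $\Leftrightarrow$ (ii) of Theorem \ref{thm:0.2}.

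There is essentially no analytic or geometric obstacle here, since all the work has already been done in Theorem \ref{thm:0.2} and its underlying results (Proposition \ref{prop:2.2}, Lemma \ref{lem:2.4}, and Theorem \ref{thm:A}); the corollary is purely a matter of unwinding definitions. The only point requiring a sentence of care is the identification $P\cap K=M$, which should be stated explicitly so that the reader sees why $C_K(Q\cap K)$ specializes to $C_K(M)$. I expect the proof to be two or three lines.

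\begin{proof}
By Example \ref{ex:QGP} and Harish-Chandra's subquotient theorem, every irreducible admissible representation of $G$ belongs to the $P$-series. Hence condition (i) of the corollary coincides with condition (i) of Theorem \ref{thm:0.2} for $Q=P$. Since $P=MAN$ is a minimal parabolic subgroup, we have $P\cap K=M$, so condition (ii) of Theorem \ref{thm:0.2} becomes $C_K(M)\cap C_K(K')=\{0\}$, which is condition (ii) of the corollary. The equivalence now follows from Theorem \ref{thm:0.2}.
\end{proof}
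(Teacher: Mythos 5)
Your proposal is correct and matches the paper's own derivation exactly: the corollary is obtained by applying Theorem \ref{thm:0.2} with $Q=P$, using Example \ref{ex:QGP} to identify the $P$-series with all irreducible admissible representations, and the identification $P\cap K=M$ to match the cones. Nothing further is needed.
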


\begin{remark}
When $G$ is of real rank one, 
 then $K/M$ is isomorphic to a sphere.  
In this case, Vargas \cite{xvargas10} classified all subgroups $K'$
 satisfying the condition in Corollary \ref{cor:4.6}.  
\end{remark}

\begin{example}
\label{ex:SOUpq}
Let $G=SO(2p,2q)$, 
 and $K' = U(p) \times U(q)$.  
Suppose $Q$ is a parabolic subgroup of $G$
 with Levi subgroup 
 $L \simeq S O(2p-1, 2q-1) \times G L (1,{\mathbb{R}})$.  
Then $Q \cap K=L \cap K$, 
 and via the standard basis
 of ${\mathfrak{t}}^{\ast} \simeq {\mathbb{R}}^{p+q}$, 
\begin{align*}
C_K (Q \cap K)=&
\{(a,0,\cdots,0;b,0,\cdots,0): a, b \ge 0\}, 
\\
C_K (K')=&
\{(x_1,x_1,\cdots, x_{[\frac p 2]}, x_{[\frac p 2]}, (0);y_1,y_1,\cdots, y_{[\frac q 2]}, y_{[\frac q 2]}, (0)):
\\
&
\hphantom{MMMMMMMMMMMMMMM}
 x_1 \ge x_2 \ge \cdots,y_1 \ge y_2 \ge \cdots \}.   
\end{align*}
Hence $C_K(Q \cap K) \cap C_K(K') =\{0\}$. 
Thus the criterion (ii) in Theorem \ref{thm:0.2} is fulfilled.  
Let $G'=U(p,q)$ be the natural subgroup of $G$
 containing $K'$.  
Then for any irreducible unitary representation $\pi$ of $G$
 in the $Q$-series
 is $G'$-admissible 
 when restricted to the subgroup $G'$
 because it is $K'$-admissible.  
See \cite{xhowetan} and \cite{xkdecomp} 
 for branching laws of representations $\pi$ in the $Q$-series
 with respect to the pair $(G,G')=(SO(2p,2q), U(p,q))$.  
\end{example}

In Example \ref{ex:SOUpq}, 
 the two polyhedral cones
 $C_K(Q \cap K)$ and $C_K(K')$ are easy 
to compute, 
 in particular, 
 because both $(K, Q \cap K)$ and $(K,K')$ are symmetric pairs.  
In the next section, 
 we recall some useful general facts for this.

\subsection{Momentum set $\Delta(T^{\ast}(K/K'))$
for symmetric pair}
\label{subsec:Csymm}
Suppose that $\sigma$ is an involutive automorphism
 of a connected compact Lie group $K$.  
We use the same letter $\sigma$ to denote its differential, 
 and write ${\mathfrak{k}}={\mathfrak{k}}^{\sigma}+{\mathfrak{k}}^{-\sigma}$
 for the eigenspace decomposition of $\sigma$
 with eigenvalues $+1$ and $-1$.  
We take a $\sigma$-stable Cartan subalgebra ${\mathfrak{j}}$
 of ${\mathfrak{k}}$ 
such that ${\mathfrak{j}}^{-\sigma}$ is a maximal abelian subspace 
 of ${\mathfrak{k}}^{-\sigma}$, 
 and fix a positive system 
 $\Sigma^+({\mathfrak{k}}_{\mathbb{C}}, {\mathfrak{j}}_{\mathbb{C}}^{-\sigma})$
 of the restricted root system 
 $\Sigma({\mathfrak{k}}_{\mathbb{C}}, {\mathfrak{j}}_{\mathbb{C}}^{-\sigma})$.  
Choose a positive system 
 $\Delta^+({\mathfrak{k}}_{\mathbb{C}}, {\mathfrak{j}}_{\mathbb{C}})$
 compatible with $\Sigma^+({\mathfrak{k}}_{\mathbb{C}}, {\mathfrak{j}}_{\mathbb{C}}^{-\sigma})$
 in the following sense:
\[
  \{\alpha|_{{\mathfrak{j}}_{\mathbb{C}}^{-\sigma}}
   :
   \alpha \in \Delta^+({\mathfrak{k}}_{\mathbb{C}}, {\mathfrak{j}}_{\mathbb{C}})\}
  \setminus
  \{0\}
  =
{\Sigma}^+({\mathfrak{k}}_{\mathbb{C}}, {\mathfrak{j}}_{\mathbb{C}}^{-\sigma}).
\]
Let $({\mathfrak{j}}^{-\sigma})_+^{\ast}$ and 
 ${\mathfrak{j}}_+^{\ast}$ be the dominant chamber
 for $\Sigma^+({\mathfrak{k}}_{\mathbb{C}}, {\mathfrak{j}}_{\mathbb{C}}^{-\sigma})$
 and $\Delta^+({\mathfrak{k}}_{\mathbb{C}}, {\mathfrak{j}}_{\mathbb{C}})$, 
 respectively.  
We may regard $({\mathfrak{j}}^{-\sigma})_+^{\ast} \subset {\mathfrak{j}}_+^{\ast}$
 according to the direct decomposition
 ${\mathfrak{j}} ={\mathfrak{j}}^{\sigma} \oplus {\mathfrak{j}}^{-\sigma}$.  
When a positive system $\Delta^+({\mathfrak{k}}_{\mathbb{C}}, {\mathfrak{t}}_{\mathbb{C}})$
 is given independently of $\sigma$, 
 we choose an inner automorphism of ${\mathfrak{k}}$  
 which induces bijections $\iota \colon {\mathfrak{t}} \overset \sim \to {\mathfrak{j}}$
 and $\iota^{\ast} \colon \Delta^+({\mathfrak{k}}_{\mathbb{C}}, {\mathfrak{j}}_{\mathbb{C}}) \overset \sim \to \Delta^+({\mathfrak{k}}_{\mathbb{C}}, {\mathfrak{t}}_{\mathbb{C}})$, 
 and set
\[
   ({\mathfrak{t}}^{-\sigma})_+^{\ast} := \iota^{\ast}(({\mathfrak{j}}^{-\sigma})_+^{\ast}) \,\subset \sqrt{-1} {\mathfrak{t}}^{\ast}.  
\]

\begin{proposition}
\label{prop:Csymm}
Suppose $(K,K')$ is a symmetric pair
 defined by an involutive automorphism $\sigma$.  
Then 
$
  \Delta(T^{\ast}(K/K'))=C_K(K')=({\mathfrak{t}}^{-\sigma})_+^{\ast}.  
$
\end{proposition}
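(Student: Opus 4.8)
The plan is to identify the momentum set $\Delta(T^{\ast}(K/K'))$ with the known cone $C_K(K')$ using Proposition \ref{prop:2.2}(1), and then to compute this cone explicitly for the symmetric pair $(K,K')$ by exploiting the structure of the isotropy representation. By Proposition \ref{prop:2.2}(1) we already have $\Delta(T^{\ast}(K/K'))=C_K(K')=\sqrt{-1}\Ad^{\ast}(K)({\mathfrak{k}}')^{\perp}\cap{\mathfrak{t}}_+^{\ast}$, so the entire content of the proposition is the third equality, namely $C_K(K')=({\mathfrak{t}}^{-\sigma})_+^{\ast}$. First I would observe that for the symmetric pair determined by $\sigma$, the subalgebra is ${\mathfrak{k}}'={\mathfrak{k}}^{\sigma}$, so that $({\mathfrak{k}}')^{\perp}$ is naturally identified (via the $K$-invariant inner product, which respects the $\sigma$-eigenspace decomposition since $\sigma$ is an isometry) with the $-1$ eigenspace ${\mathfrak{k}}^{-\sigma}$. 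Thus $C_K(K')=\sqrt{-1}\Ad^{\ast}(K){\mathfrak{k}}^{-\sigma}\cap{\mathfrak{t}}_+^{\ast}$, and the problem reduces to describing the intersection of the $\Ad(K)$-sweep of ${\mathfrak{k}}^{-\sigma}$ with the dominant chamber.

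The key step is a convexity/maximal-torus argument for the symmetric space. Every $\Ad(K)$-orbit in ${\mathfrak{k}}^{-\sigma}$ meets the maximal abelian subspace ${\mathfrak{j}}^{-\sigma}$, so that $\Ad(K_{\mathrm{fix}}){\mathfrak{j}}^{-\sigma}={\mathfrak{k}}^{-\sigma}$ where $K_{\mathrm{fix}}=(K^{\sigma})_0$; this is the standard polar decomposition for the compact symmetric space. Consequently $\Ad(K){\mathfrak{k}}^{-\sigma}=\Ad(K){\mathfrak{j}}^{-\sigma}$. The next point is to transport this to the chosen Cartan ${\mathfrak{t}}$ and positive system: using the inner automorphism $\iota\colon{\mathfrak{t}}\overset{\sim}{\to}{\mathfrak{j}}$ fixed before the proposition, together with the compatibility of $\Delta^+({\mathfrak{k}}_{\mathbb{C}},{\mathfrak{j}}_{\mathbb{C}})$ with $\Sigma^+$, I would argue that intersecting $\Ad(K){\mathfrak{j}}^{-\sigma}$ with the dominant chamber ${\mathfrak{j}}_+^{\ast}$ picks out exactly the restricted-dominant cone $({\mathfrak{j}}^{-\sigma})_+^{\ast}$, which $\iota^{\ast}$ then carries to $({\mathfrak{t}}^{-\sigma})_+^{\ast}$ by definition.

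The crucial identity to establish is therefore $\Ad(K){\mathfrak{j}}^{-\sigma}\cap{\mathfrak{j}}_+^{\ast}=({\mathfrak{j}}^{-\sigma})_+^{\ast}$. The inclusion $\supset$ is immediate since $({\mathfrak{j}}^{-\sigma})_+^{\ast}\subset{\mathfrak{j}}^{-\sigma}\cap{\mathfrak{j}}_+^{\ast}$. For the reverse inclusion, given $\xi\in\Ad(K){\mathfrak{j}}^{-\sigma}$ lying in ${\mathfrak{j}}_+^{\ast}$, I would use the little Weyl group $W(\Sigma)$ of the restricted root system acting on ${\mathfrak{j}}^{-\sigma}$: any $\Ad(K)$-orbit meeting ${\mathfrak{j}}^{-\sigma}$ meets it in a single $W(\Sigma)$-orbit, so I may assume $\xi$ lies in the restricted-dominant chamber $({\mathfrak{j}}^{-\sigma})_+^{\ast}$ already, and the compatibility of the two positive systems guarantees that restricted-dominance together with full-dominance force $\xi\in({\mathfrak{j}}^{-\sigma})_+^{\ast}$ with no ${\mathfrak{j}}^{\sigma}$-component.

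The hard part will be the compatibility bookkeeping between the two positive systems and the precise matching of Weyl chambers — in particular verifying that the condition $\{\alpha|_{{\mathfrak{j}}_{\mathbb{C}}^{-\sigma}}:\alpha\in\Delta^+\}\setminus\{0\}=\Sigma^+$ really does imply that a vector in ${\mathfrak{j}}^{-\sigma}$ which is $\Delta^+$-dominant is automatically $\Sigma^+$-dominant and vice versa. Once this chamber-matching is pinned down, the proof is essentially a reference to the polar decomposition of the compact symmetric space $K/K'$ combined with Proposition \ref{prop:2.2}(1); I would also remark that this is the symmetric-pair specialization of the general fact that $C_K(K')$ is computed by the Cartan projection of the cotangent directions, with Kostant's convexity theorem (Fact \ref{fact:Kos}) being the case $K'=T$ already treated in Proposition \ref{prop:191357}.
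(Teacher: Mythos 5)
The paper states Proposition \ref{prop:Csymm} without supplying a proof, so there is nothing to compare your argument against line by line; I will assess it on its own terms. Your overall strategy is the right one and matches what the surrounding text clearly intends: reduce to $C_K(K')={\mathfrak{t}}_+^{\ast}\cap\sqrt{-1}\,\Ad^{\ast}(K)\,{\mathfrak{k}}^{-\sigma}$ via Proposition \ref{prop:2.2}(1) and a $\sigma$-invariant choice of the $K$-invariant inner product, use the polar decomposition $\Ad(K^{\sigma}_0)\,{\mathfrak{j}}^{-\sigma}={\mathfrak{k}}^{-\sigma}$ to replace ${\mathfrak{k}}^{-\sigma}$ by ${\mathfrak{j}}^{-\sigma}$, transport by the inner automorphism $\iota$, and then identify the intersection with the dominant chamber. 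The inclusion $({\mathfrak{j}}^{-\sigma})_+^{\ast}\subset{\mathfrak{j}}_+^{\ast}$ does follow from the stated compatibility of $\Delta^+({\mathfrak{k}}_{\mathbb{C}},{\mathfrak{j}}_{\mathbb{C}})$ with $\Sigma^+$, exactly as you say, since for $\xi\in{\mathfrak{j}}^{-\sigma}$ one has $\langle\xi,\alpha\rangle=\langle\xi,\alpha|_{{\mathfrak{j}}^{-\sigma}}\rangle$.

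The one place where your argument as written does not close is the reverse inclusion. You say that since the $\Ad(K)$-orbit meets ${\mathfrak{j}}^{-\sigma}$ in a single $W(\Sigma)$-orbit you ``may assume $\xi$ lies in the restricted-dominant chamber already''; but $\xi$ is a \emph{fixed} element of ${\mathfrak{j}}_+^{\ast}$, and whether it lies in ${\mathfrak{j}}^{-\sigma}$ at all is precisely what you must prove, so you are not free to replace it by another point of its orbit. The correct way to finish is: the orbit $\Ad(K)\xi$ meets ${\mathfrak{j}}^{-\sigma}$, and since $W(\Sigma)$ is realized by $N_{K^{\sigma}_0}({\mathfrak{j}}^{-\sigma})$ and acts transitively on the restricted chambers, the orbit contains some $\eta\in({\mathfrak{j}}^{-\sigma})_+^{\ast}\subset{\mathfrak{j}}_+^{\ast}$; now invoke the standard fact that an $\Ad(K)$-orbit meets the closed dominant chamber ${\mathfrak{j}}_+^{\ast}$ in \emph{exactly one} point (conjugate elements of ${\mathfrak{j}}$ are $W_K$-conjugate, and ${\mathfrak{j}}_+^{\ast}$ is a fundamental domain for $W_K$), whence $\xi=\eta\in({\mathfrak{j}}^{-\sigma})_+^{\ast}$. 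This uniqueness-of-dominant-representative step is the missing ingredient; your appeal to ``compatibility forcing no ${\mathfrak{j}}^{\sigma}$-component'' does not by itself yield it. With that repair the proof is complete, and it does indeed specialize, for $K'=T$ in a semisimple $K$, to the computation already done in Proposition \ref{prop:191357} via Fact \ref{fact:Kos}.
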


\begin{remark}
Suppose $K$ is a maximal compact subgroup 
 of a connected real reductive Lie group, 
 and $Q$ a standard parabolic subgroup.  
If the unipotent radical of $Q$ is abelian, 
 then $(K,Q \cap K)$ forms a symmetric pair, 
 and therefore we can apply also Proposition \ref{prop:Csymm}
 to the computation 
 of $C_K(Q \cap K)$ in Theorem \ref{thm:0.2}.  
\end{remark}

\subsection{Boundaries of spherical varieties
 with hidden symmetries}
\label{subsec:overgp}

As typical examples of Theorem \ref{thm:0.2}, 
 we formulate the following theorem
 motivated by analysis 
 on standard pseudo-Riemannian locally symmetric spaces 
 $\Gamma \backslash G/H$
 (\cite{xpoincare, Specstandard}):
\begin{theorem}
\label{thm:overgp}
Let $G/H$ be a symmetric space
 with $G$ simple Lie group, 
 and $Q$ a minimal parabolic subgroup for the symmetric space $G/H$.  
Let $G'$ be a reductive subgroup of $G$
 acting properly on $G/H$, 
 such that $G_{\mathbb{C}}/H_{\mathbb{C}}$ is 
 $G_{\mathbb{C}}'$-spherical.  
Then any irreducible admissible representation $\pi$ of $G$
 in the $Q$-series is $K'$-admissible.  
In particular, 
 the restriction $\pi|_{G'}$ is infinitesimally discretely decomposable 
 in the sense of \cite[Def.~4.2.3]{deco-euro}.  
\end{theorem}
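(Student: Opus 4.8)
The plan is to reduce Theorem \ref{thm:overgp} to the combinatorial criterion of Theorem \ref{thm:0.2}, namely to verify that $C_K(Q\cap K)\cap C_K(K')=\{0\}$, and then to identify each of these two cones via the symplectic/spherical dictionary established in Proposition \ref{prop:2.2}. Since $\pi$ lies in the $Q$-series and $Q$ is a minimal parabolic for $G/H$, Theorem \ref{thm:0.2} tells us that $K'$-admissibility of every such $\pi$ is equivalent to the transversality $C_K(Q\cap K)\cap C_K(K')=\{0\}$. By Proposition \ref{prop:2.2}(1), each cone is the momentum set of the cotangent bundle, $C_K(Q\cap K)=\Delta(T^{\ast}(K/(Q\cap K)))$ and $C_K(K')=\Delta(T^{\ast}(K/K'))$, so the entire theorem reduces to showing these two momentum sets meet only at the origin.

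First I would translate the two geometric hypotheses into cone statements. The properness of the $G'$-action on $G/H$ is, by the standard Kobayashi theory of proper actions on homogeneous spaces, an asymptotic (``calculus of limit cones'') condition ensuring that the noncompact directions of $G'$ and of $H$ are transverse; restricted to the maximal compact picture this forces the relevant asymptotic $K$-supports to intersect trivially. The second hypothesis, that $G_{\mathbb{C}}/H_{\mathbb{C}}$ is $G'_{\mathbb{C}}$-spherical, guarantees via Proposition \ref{prop:2.2}(2) that the cone $C_K(K')$ admits a clean description as $\posspan{\mathbb R}$ of the finitely many highest weights generating $\SSK{\reg{K_{\mathbb C}/K'_{\mathbb C}}}$, and likewise that $C_K(Q\cap K)$ is controlled by the $K$-support of functions on the flag-type variety attached to $Q$. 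The key technical point is that the momentum set $C_K(Q\cap K)$ for a minimal parabolic $Q$ of $G/H$ coincides with the ``$H$-directions'' cone, because $\SSK{\reg{K_{\mathbb C}/(Q\cap K)_{\mathbb C}}}$ is governed by the same restricted root data that defines $Q$ through the generic element $X\in{\mathfrak a}\subset{\mathfrak g}^{-\sigma,-\theta}$.

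With these identifications in hand, the intersection $C_K(Q\cap K)\cap C_K(K')=\{0\}$ becomes a statement comparing, inside $\sqrt{-1}{\mathfrak t}^{\ast}$, the cone coming from the $H$-side geometry of $G/H$ with the cone coming from the spherical subgroup $G'$. The properness assumption is exactly what makes these two cones transverse: properness of the $G'$-action on $G/H$ means, in Kobayashi's criterion, that $\Ad^{\ast}(K)(\text{noncompact part of }{\mathfrak g}')^{\perp}$ and the analogous set for $H$ share no common asymptotic direction, and passing to the compact picture yields $C_K(Q\cap K)\cap C_K(K')=\{0\}$. Applying Theorem \ref{thm:0.2} then gives $K'$-admissibility of every $\pi$ in the $Q$-series, and the final clause about infinitesimal discrete decomposability of $\pi|_{G'}$ follows from the implication (ii)$\Rightarrow$(i) of \cite[Thm.~1.2]{xkdecomp} together with the definition \cite[Def.~4.2.3]{deco-euro}.

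The hard part will be establishing the precise dictionary between the properness/sphericity hypotheses and the cone-transversality condition $C_K(Q\cap K)\cap C_K(K')=\{0\}$. Sphericity of $G_{\mathbb{C}}/H_{\mathbb{C}}$ under $G'_{\mathbb{C}}$ is a statement about the full complexified group action, whereas the cones $C_K(\cdot)$ live in the maximal compact torus ${\mathfrak t}^{\ast}$, so I expect the main obstacle to be controlling how the spherical structure descends to the compact level and matches the restricted root data defining the minimal parabolic $Q$ for $G/H$. Once that compatibility is made rigorous—most naturally by invoking Sjamaar's results on momentum images of spherical varieties as already used in Proposition \ref{prop:2.2}—the transversality should follow from Kobayashi's proper-action criterion applied asymptotically, and the rest is a direct appeal to Theorem \ref{thm:0.2}.
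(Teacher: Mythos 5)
Your first step---reducing the statement to the cone condition $C_K(Q\cap K)\cap C_K(K')=\{0\}$ via Theorem \ref{thm:0.2}---is exactly the reduction the paper intends. The gap is in everything after that. You assert a ``dictionary'' under which Kobayashi's properness criterion, ``passed to the compact picture,'' yields the transversality of the two cones; but that criterion is a statement about the split parts (Cartan projections into a maximal abelian subspace ${\mathfrak a}\subset{\mathfrak p}$), whereas $C_K(Q\cap K)$ and $C_K(K')$ live in $\sqrt{-1}\,{\mathfrak t}^{\ast}\subset\sqrt{-1}\,{\mathfrak k}^{\ast}$ and depend only on how the compact subgroups $Q\cap K$ and $K'$ sit inside $K$. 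There is no general implication from one to the other: any compact subgroup $G'=K'$ acts properly on $G/H$, yet for such $K'$ the cone $C_K(K')$ can be as large as ${\mathfrak t}_+^{\ast}$ (e.g.\ $K'=\{e\}$), so properness alone cannot force $C_K(Q\cap K)\cap C_K(K')=\{0\}$. Likewise, your appeal to sphericity through Proposition \ref{prop:2.2}(2) does no work, since that proposition describes $C_K(K')$ as $\posspan{\Bbb R}\{\lambda_1,\dots,\lambda_k\}$ for \emph{every} closed subgroup $K'$, spherical or not. So the central step of your argument is asserted, not proved, and the proposed mechanism would fail.

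What the paper actually does is not a uniform deduction from the hypotheses: it invokes the classification of the triples $(G,H,G')$ satisfying the properness and sphericity assumptions (from the cited preprint of Kassel--Kobayashi) and verifies the cone condition $C_K(Q\cap K)\cap C_K(K')=\{0\}$ case by case by explicit computation of both polyhedral cones, as illustrated in Examples \ref{ex:SOUpq} and \ref{ex:D4} (using Proposition \ref{prop:Csymm} when $(K,Q\cap K)$ or $(K,K')$ is a symmetric pair). If you want to complete your proof, you must either carry out that finite verification over the classified list, or supply a genuinely new argument linking the hypotheses to the compact-side cones; the latter is not available in the paper and your sketch does not provide it.
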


This theorem is a counterpart
 of \cite[Thm.~5.1]{xkhowe70}
 where $\pi$ was assumed to be a subquotient 
 of the regular representation of $G$
 in the space ${\mathcal{D}}'(G/H)$
 of distributions on $G/H$.

In the setting of Theorem \ref{thm:overgp}, 
 the symmetric space $G/H$ admits 
 a compact Clifford--Klein form $\Gamma \backslash G/H$
 as the quotient by a torsion-free cocompact subgroup $\Gamma$ in $G'$.  
The classification of the triples $(G,H,G')$
 in Theorem \ref{thm:overgp} is given in \cite{Specstandard}.  
Applications of Theorem \ref{thm:overgp}
 will be discussed in subsequent papers.  
In this article, 
 we illustrate Theorem \ref{thm:overgp}
 only by some examples:
\begin{example}
\label{ex:OOU}
The triple $(G,H,G')=(SO(2p,2q), SO(2p-1,2q), U(p,q))$ satisfies
 the assumptions of Theorem \ref{thm:0.2}.  
In this case, 
 Example \ref{ex:SOUpq} is recovered.  
\end{example}

\begin{example}
\label{ex:D4}
The triple $(G,H,G')=(SO(8,8), SO(7,8), Spin(1,8))$ satisfies
 the assumption of Theorem \ref{thm:0.2}.  
Via the standard basis of ${\mathfrak{t}}^{\ast} \simeq {\mathbb{R}}^8$, 
 we may write as 
\begin{align*}
C_K(Q \cap K) &= \{(a,0,0,0;b,0,0,0): a, b \ge 0\}, 
\\
C_K(K') &= \{((x_1,x_2,x_3,x_4), \zeta(x_1,x_2,x_3,-x_4)):x_1 \ge x_2 \ge x_3 \ge |x_4| \}, 
\end{align*}
where $\zeta$ is an outer automorphism of order 3
 for the root system $D_4$.  
Thus the criterion (ii) in Theorem \ref{thm:0.2} is fulfilled, 
 and Theorem \ref{thm:overgp} is verified in this case.  
Explicit branching laws of irreducible square-integrable representations
 in the $Q$-series
 with respect to $(G,G')=(SO(8,8), Spin(1,8))$
 are obtained in \cite[Thm.~5.5]{xkhowe70}
 and in \cite{xSTV}.  
\end{example}

\vskip 1pc
\centerline{{\bf{Acknowledgement}}}
Theorem \ref{thm:A} was motivated
 when I was visiting Harvard University
 during the academic year 2000-2001.
I thank all the audience of my graduate course there,
 who patiently attended the class and raised 
 a question if the reverse implication (ii) $\Rightarrow$ (i)
 in Theorem \ref{thm:A} holds
 when I was explaining a proof for (i) $\Rightarrow$ (ii)
 by a different approach based on micro-local analysis.  
Theorem \ref{thm:A} was then proved 
 by a change of machinery, 
 and announced in the Proceedings of ICM 2002 \cite[Thm.~D]{xkICM2002}. 
A sketch of the proof 
 was given in the lecture notes
 \cite[Chap.~6]{deco-euro}
 which was prepared during the special year program 
 \lq\lq{Representation Theory 
 of Lie groups 2002}\rq\rq\ 
 at IMS-NUS organized by E.-T.~Tan and C.-B.~Zhu.  
I would like to extend my thanks to M.~Duflo, 
B.~Kostant, T.~Kubo, 
B.~{\O}rsted, 
W.~Schmid, 
M.~Vergne, 
 and D.~Vogan
 for their comments on this work in various occasions. 
I also would like to mention that since then, 
 there has been also interesting progress from the geometric aspect
 of Hamiltonian actions 
 and their quantum analogues, 
 see Paradan \cite{para2015, para2019}
 in the case of discrete series representations, 
 for instance.

This work was partially supported
 by Grant-in-Aid for Scientific Research (A) (JP18H03669), 
Japan Society for the Promotion of Science.

\end{document}